\newcommand{\ve}[1]{\mathbf{#1}}
\newcommand{\norm}[1]{\left\| #1 \right\|}
\newtheorem{theorem}{Theorem}[section]
\newtheorem{lemma}[theorem]{Lemma}
\newtheorem{corollary}[theorem]{Corollary}
\newtheorem{definition}[theorem]{Definition}
\newtheorem{remark}[theorem]{Remark}
\newtheorem{fact}[theorem]{Fact}
\newtheorem{proposition}[theorem]{Proposition}
\newcommand{\jh}[1]{\textcolor{magenta}{#1}}
\title{Paving the Way for Consensus: Convergence of Block Gossip Algorithms}
\author{Jamie Haddock}
\address{Department of Mathematics, Harvey Mudd College, Claremont, CA 91711}
\thanks{JH and BJ are grateful to and were partially supported by NSF CAREER DMS \#1348721, NSF DMS \#2011140 and NSF BIGDATA DMS \#1740325 which are led by Professor Deanna Needell. JH is also partially supported by NSF DMS \#2111440.}
\email{jhaddock@g.hmc.edu}
\urladdr{https://jamiehadd.github.io/} 
\author{Benjamin Jarman}
\address{Department of Mathematics, University of California, Los Angeles, 
Los Angeles, CA 90095}
\email{bjarman@math.ucla.edu}
\thanks{}
\urladdr{https://www.math.ucla.edu/$\sim$bjarman/} 
\author{Chen Yap}
\address{Department of Mathematics, University of California, Los Angeles, 
Los Angeles, CA 90095}
\email{rkwyap@gmail.com}
\date{\today}
\begin{document}

\maketitle

\begin{abstract}
Gossip protocols are popular methods for average consensus problems in distributed computing. We prove new convergence guarantees for a variety of such protocols, including path, clique, and synchronous pairwise gossip. These arise by exploiting the connection between these protocols and the block randomized Kaczmarz method for solving linear systems. Moreover, we extend existing convergence results for block randomized Kaczmarz to allow for a more general choice of blocks, rank-deficient systems, and provide a tighter convergence rate guarantee. We furthermore apply this analysis to inconsistent consensus models and obtain similar guarantees. An extensive empirical analysis of these methods is provided for a variety of synthetic networks.
\end{abstract}

\section{Introduction}

Consider a network in which every node has a secret (unknown by the other nodes) value, and the goal is for all nodes to learn the average of these values.  This problem is a classical and fundamental problem in distributed computing and multi-agent systems known as \emph{average consensus} \cite{kempe2003gossip}; it has additional real-world applications in clock synchronization~\cite{freris2012fast}, PageRank, localization without GPS~\cite{zhang2019distributed}, opinion formation, distributed data fusion in sensor networks~\cite{xiao2005scheme}, blockchain technology, and load balancing~\cite{cybenko1989dynamic}. See Figure~\ref{fig:AC1} for a visualization of an average consensus problem.

Initial approaches for this problem may be to allow all nodes to pass their secret value to a single \emph{hub} node which would then perform the averaging and pass this value back to the others, or for every node to share its stored knowledge of all other nodes' secret values with its neighbours until all nodes have learned all stored values (a process known as \emph{flooding}~\cite{xiao2007distributed}).  However, such methods are problematic.  The first requires communication that may be infeasible as it may not respect the topology of the underlying network; in particular, there may be no hub node (see Figure~\ref{fig:AC1}).  The second may require many instances of communication between nodes and thus struggle to scale to modern large-scale networks. An attractive class of methods that go some way towards rectifying these issues are \emph{gossip} protocols, where at each time-step some subset of nodes are `activated' to share their stored information with each other across network edges. As is common in large-scale settings, such protocols are \emph{randomized} in the sense that activated nodes are often chosen at random, with distribution depending on the particular gossip protocol. Over many iterations, the values held at each node converge to the average over the whole network (with mild assumptions including connectivity of the underlying network) and the problem is solved in a distributed manner. Such methods have been a topic of popular study this side of the millenium, with the seminal 2006 paper of Boyd et al.~\cite{Boyd2006RandomizedGA} sparking a flurry of research on the topic~\cite{aysal2009broadcast,dimakis2010survey,dimakis2008geographic,hanzely2017privacy,liu2013analysis,nedic2018network,olshevsky2009convergence,zouzias2015randomized}.

\begin{figure}
         \centering
         \includegraphics[width=0.3\textwidth]{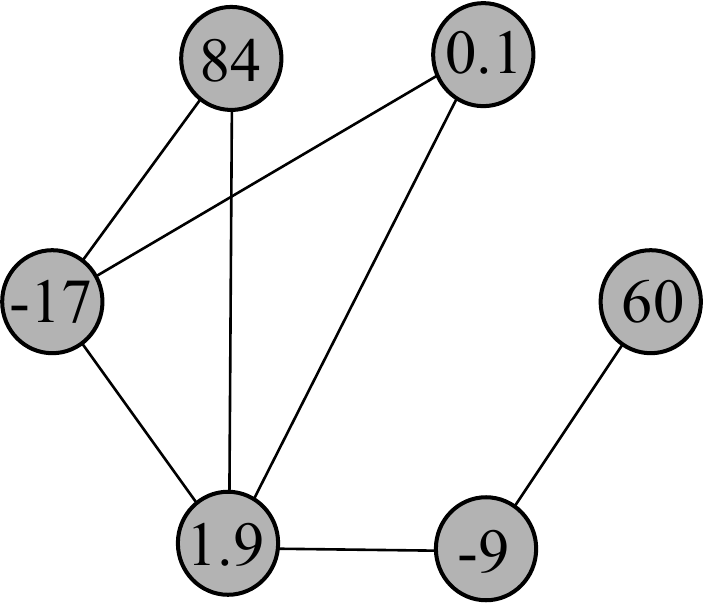}
         \caption{Average consensus problem with initial (secret) values listed. The consensus value for this problem is $\bar{c} = 20$.}
         \label{fig:AC1}
\end{figure}

In \cite{loizou2019revisiting}, Loizou and Richt\'arik united the study of randomized gossip protocols with randomized numerical linear algebra. They showed that a wide class of gossip protocols can be interpreted, under certain assumptions, as randomized iterative linear system solvers applied to a linear system derived from the network at hand. This remarkable connection prompted a variety of convergence results, new accelerated and weighted gossip variants, and dual edge-based gossip protocols, opening up the possibility for further links to be developed. In this paper, we hone in on the class of \emph{block} gossip algorithms discussed in their work: we show that several popular gossip protocols are members of this class, give strong convergence results for said methods, and generalize the theory on both the linear algebra and gossip protocol sides of the problem.

\subsection{Contributions}

Our main contributions in this work are threefold:
\begin{itemize}
    \item We generalize previous results on randomized block iterative methods for solving linear systems (see~\cite{necoara2019faster,needell2013paved,needell2015randomized}) to include the case where the system is less than full rank (which is vital for the average consensus case, as we will see), to include a wider class of block sampling protocols, and to sharpen the resulting convergence rate guarantee.
    \item We derive new convergence rates for popular gossip protocols such as \emph{path} \cite{pathgossip}, \emph{clique} \cite{cliquegossip}, and \emph{edge-independent set} \cite{Boyd2006RandomizedGA} gossiping, by showing they can be interpreted as block gossip methods. We furthermore generalize to include the case where multiple node sets can be activated simultaneously, and analyze the dependence of a protocol's performance on the spanning trees of the activated subgraphs.
    \item We give new analyses of gossip for \emph{inconsistent} consensus models. We link edge communication errors in a network to inconsistent, or noisy, linear systems, and connect the performance of iterative solvers on said systems to the performance of gossip protocols on said networks.
\end{itemize}

Furthermore, we provide a wide range of experiments to demonstrate the comparative performance of the discussed algorithms, both previously existing and new, on a variety of network structures.

We remark that we do not go into detail regarding the specific communication protocols that networks may have. For example, certain methods (e.g., path gossiping~\cite{pathgossip}) require multiple instances of information sharing at each iteration, which may not be implementable in all networks, and others (e.g., edge independent set gossiping~\cite{Boyd2006RandomizedGA}) require communication across each edge only once at each time step. 
We point the reader to \cite{Boyd2006RandomizedGA,sabergossip2007} for more details on network communication protocols and their use in gossip algorithms.


\subsection{Organization}

The rest of the paper is organized as follows. In Section~\ref{subsec:notation}, we introduce notation that will be used throughout the work. In Section~\ref{subsec:mainresults}, we describe in detail the methods considered in this paper and present our main theoretical results.  In Section~\ref{subsec:relatedwork}, we provide detailed background on the average consensus problem, and recent work on block iterative methods for solving linear systems. In Section~\ref{sec:proof}, we prove our generalization of the block iterative method theory, and go into detail on the connection between this and block gossip algorithms for average consensus. In Section~\ref{sec:BGsampling}, we connect particularly popular block gossip algorithms with our theory and produce explicit convergence rates for them. In Section~\ref{sec:inconsistent}, we explore the case of average consensus models with edge miscommunication and provide convergence results for the gossip method on this type of faulty model. Lastly, in Section~\ref{sec:experiments}, we provide numerous experiments and compare our theoretical results with the empirical behavior of the considered gossip methods.

\subsection{Notation and Definitions}\label{subsec:notation}
Let $\mathcal{G} = (\mathcal{V},\mathcal{E})$ denote an undirected network where $\mathcal{V}$ denotes the $n$ nodes of the network and $\mathcal{E}$ denotes the $m$ edges.  Let $\ve{Q} \in \mathbb{R}^{m \times n}$ denote the incidence matrix of the network.  Each row of $\ve{Q}$ corresponds to an edge of the network and each column corresponds to a node.  If row $l$ of $\ve{Q}$, denoted $\ve{q}_{l}^T$, corresponds to the edge $e_{ij} \in \mathcal{E}$ connecting node $i$ and $j$, then all but the $i$th and $j$th entries of $\ve{q}_{l}$ are zero with these entries containing a one and negative one (the order of the positive and negative entries does not matter).

We recall the notion of several special subgraph structures.  We remind the reader of the definition of an \emph{independent edge set}, that is a subset of edges of the graph in which no two edges are incident to the same node.  Additionally, we remind the reader that a \emph{clique subgraph}, or \emph{complete subgraph}, is a subset of edges of the graph which together form a subgraph in which every pair of nodes is connected; that is, the edge-induced subgraph is a clique.  Finally, we remind the reader of the definition of a \emph{path subgraph}, a subset of edges which together form a path graph; that is, the edge-induced subgraph is a path.

Throughout, we let $[m]$ denote the set of integers from one to $m$; $[m] := \{1, 2, \cdots, m\}$.  We additionally let $\ve{0}$ denote the vector of all zeros and let $\ve{1}$ denote the vector of all ones (the dimensions of these vectors will be given or obvious from context).

In what follows, we let $A_\tau$ denote the subset of matrix $A$ with rows indexed by the set $\tau$.  We let $\lambda_{\min}(A), \lambda_{\min +}(A)$, and $\lambda_{\max}(A)$ denote the minimum, minimum non-zero, and maximum eigenvalues of the matrix $A$, respectively.  Additionally, we denote by $A^\dagger$ the Moore-Penrose pseudoinverse of the matrix $A$.

We additionally recall the notion of a \emph{row paving}  of a matrix $\ve{A}$, which controls the conditioning of a set of submatrices that partition the matrix $\ve{A}$.  We include here the definition provided by Needell and Tropp in~\cite{needell2013paved}, and note that earlier work on the construction of pavings for block projection methods is due to Popa~\cite{Pop99:Block-Projections-Algorithms}. In the wider operator theory context, pavings have long been a topic of interest; see~\cite{weberpaving} for a review.

\begin{definition}
A $(d, \alpha, \beta)$ \emph{row paving} of a matrix $\ve{A}$ is a partition $T = \{\tau_1,\tau_2, \cdots, \tau_d\}$ of the row indices that satisfies $$\alpha \le \lambda_{\min}(\ve{A}_\tau \ve{A}_\tau^\top) \text{ and } \lambda_{\max}(\ve{A}_\tau \ve{A}_\tau^\top) \le \beta \text{ for each } \tau \in T.$$
\end{definition}

We will additionally be interested in subsets of the row indices that do not necessarily partition the rows.  We define a \emph{row covering} to be the natural relaxation of a row paving in which the requirement that the subsets partition the row indices be relaxed and where the parameter $\alpha$ provides a lower-bound for the minimum \emph{non-zero} eigenvalue rather than the minimum eigenvalue.  Each of these are important generalizations for our application of interest, gossip algorithms for average consensus.

\begin{definition}
A $(d, \alpha, \beta, r, R)$ \emph{row covering} of a matrix $\ve{A}$ is a collection of subsets $T = \{\tau_1,\tau_2, \cdots, \tau_d\}$ of the row indices, $\tau_i \subset [m]$ for all $i = 1, \cdots, d$, that covers the row indices, for each $i \in [m]$ we have $i \in \tau_l$ for some $l = 1, \cdots, d$, and that satisfies $$\alpha \le \lambda_{\min +}(\ve{A}_\tau \ve{A}_\tau^\top) \text{ and } \lambda_{\max}(\ve{A}_\tau \ve{A}_\tau^\top) \le \beta \text{ for each } \tau \in T,$$ where $r$ and $R$ are the minimum and maximum, respectively, number of blocks in which a single row appears, i.e., $r = \min_{i \in [m]} |\{\tau_l \in T: i \in \tau_l\}|$ and $R = \max_{i \in [m]} |\{\tau_l \in T: i \in \tau_l\}|$.
\end{definition}


First, note that the minimum and maximum number of repeated occurrences of a row in the blocks, $r$ and $R$, satisfy $$1 \le r \le R \le d.$$ Further, note that a $(d, \alpha, \beta)$ row paving is by definition a $(d, \alpha, \beta, 1, 1)$ row covering.  Next, we remind the readers that \emph{every} sufficiently tall row-normalized matrix $\ve{A}$ admits a row paving with $\alpha > 0$ and $\beta < 2$~\cite{needell2013paved}; the authors illustrate that additional mild assumptions on the matrix allows one to produce such a row paving via random partitioning.  Additionally, we note that if the matrix in question is an incidence matrix $\ve{Q}$, where rows correspond to edges in a graph $\mathcal{G} = (\mathcal{V},\mathcal{E})$, then the row covering $T = \{\tau_1, \cdots, \tau_d\}$ corresponds to a collection of sets of edges such that every edge appears in at least one set; that is $\tau_i \subset \mathcal{E}$ for all $i \in [d]$ and for all $e \in \mathcal{E}$, we have $e \in \tau_l$ for some $l \in [d]$.

\subsection{Main Results}\label{subsec:mainresults}

The average consensus problem is defined over an undirected network $\mathcal{G} = (\mathcal{V},\mathcal{E})$.
We let $\ve{c} = (c^{(1)}, c^{(2)}, ..., c^{(n)})^T$ denote the vector of secret values (i.e., $c^{(i)}$ is the secret value of the $i$th node) initally held by the nodes of the network.
The average consensus problem is then to ensure, after some communication protocol is applied, that each node stores the averaged value $\bar{c} := \text{mean}(\ve{c})$; that is the final vector of updated node values is $\ve{c}^* = \bar{c}\ve{1}$ where $\ve{1} \in \mathbb{R}^n$ is the vector of all ones.  We can formulate this problem as:
\begin{equation}
\text{Find } \ve{c}^* = \text{argmin}_{\ve{x} \in \mathbb{R}^n} \|\ve{x} - \ve{c}\|^2 \text{ s.t. } \ve{Q}\ve{x} = \ve{0}.  \label{AC}
\end{equation} 
Here $\ve{0} \in \mathbb{R}^{|\mathcal{E}|}$ denotes the all zeros vector. The average consensus problem may be formulated in this way using either the incidence matrix or the Laplacian matrix, $L = D - A$ where $D$ is the diagonal matrix of node degrees and $A$ is the adjacency matrix, or more generally as an \emph{average consensus system} as defined in \cite{loizou2016new}. In this work, we will focus upon the \emph{block gossip} methods for this problem, in which groups of edges are sampled and nodes connected by these edges update their value to their average (or the average of a subset of these nodes); pseudocode for this method is provided in Algorithm~\ref{alg:BG}.  See Figure~\ref{fig:BGsteps} for a visualization of one step of Algorithm~\ref{alg:BG} on the average consensus problem of Figure~\ref{fig:AC1} with independent edge set, path, and clique block sampling.

We next present our main result which illustrates that the block gossip method converges at least linearly in expectation to consensus if the underlying graph is connected.  Moreover, we specialize this result to three special cases: the case when the blocks of edges sampled in each iteration are independent edge sets, when they are clique subgraphs, and when they are path subgraphs, and give refined convergence rates for these cases.  

\begin{corollary}\label{cor:BG}
Suppose graph $\mathcal{G} = (\mathcal{V},\mathcal{E})$ is connected, $\ve{Q} \in \mathbb{R}^{|\mathcal{E}| \times |\mathcal{V}|}$ is the incidence matrix for $\mathcal{G}$, and $T = \{\tau_1, \cdots,\tau_d\}$ is a $(d,\alpha,\beta,r,R)$ row covering for $\ve{Q}$ with $M = \max_{i \in [d]}|\tau_i|$.  Then the block gossip method with blocks determined by $T$ converges at least linearly in expectation with the guarantee $$\mathbb{E}\norm{\ve{c}_k - \ve{c}^*}^2 \le \left(1 - \frac{r\alpha(\mathcal{G})}{\beta d}\right)^k \norm{\ve{c} - \ve{c}^*}^2,$$ where $\alpha(\mathcal{G})$ is the algebraic connectivity of graph $\mathcal{G}$. 
\begin{enumerate}
    \item If $T$ consists of independent edge sets, the rate constant can be bounded by $$\left(1 - \frac{r\alpha(\mathcal{G})}{2d}\right).$$
    \item If $T$ consists of path subgraphs, the rate constant can be bounded by $$\left(1 - \frac{r\alpha(G)}{(2 - 2\cos\frac{M\pi}{M+1})d}\right) \le \left(1 - \frac{r\alpha(\mathcal{G})}{4d}\right).$$
    \item If $T$ consists of clique subgraphs, the rate constant can be bounded by $$\left(1 - \frac{r\alpha(G)}{(2 - 2\cos\frac{M\pi}{M+1})d}\right) \le \left(1 - \frac{r\alpha(\mathcal{G})}{4d}\right).$$
    \item If $T$ consists of arbitrary connected subgraphs, the rate constant can be bounded by $$\left(1 - \frac{r\alpha(\mathcal{G})}{Md}\right).$$
\end{enumerate}
\end{corollary}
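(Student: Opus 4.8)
The plan is to recognize the block gossip method of Algorithm~\ref{alg:BG} as the block randomized Kaczmarz iteration applied to the homogeneous system $\ve{Q}\ve{x} = \ve{0}$ initialized at $\ve{c}$, and then to invoke our general convergence theorem established in Section~\ref{sec:proof}. First I would verify that a gossip step which averages the nodes touched by an edge block $\tau$ coincides exactly with the block Kaczmarz projection $\ve{c}_{k+1} = (I - \ve{Q}_\tau^\dagger \ve{Q}_\tau)\ve{c}_k$: the matrix $I - \ve{Q}_\tau^\dagger \ve{Q}_\tau$ is the orthogonal projector onto $\ker \ve{Q}_\tau$, and $\ker \ve{Q}_\tau$ is precisely the space of vectors constant on each connected component of the subgraph induced by $\tau$, so this projection replaces each component's entries by their average. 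With this identification the master estimate reads $\mathbb{E}\norm{\ve{c}_k - \ve{c}^*}^2 \le \bigl(1 - r\lambda_{\min +}(\ve{Q}^\top\ve{Q})/(\beta d)\bigr)^k \norm{\ve{c} - \ve{c}^*}^2$, so the only extra ingredient needed for the headline rate is the identity $\lambda_{\min +}(\ve{Q}^\top\ve{Q}) = \alpha(\mathcal{G})$. This holds because $\ve{Q}^\top\ve{Q}$ is exactly the graph Laplacian $L$, whose smallest nonzero eigenvalue is by definition the algebraic connectivity; connectedness of $\mathcal{G}$ forces $\ker L = \mathrm{span}(\ve{1})$, so that the consensus vector $\ve{c}^* = \bar c\,\ve{1}$ is the orthogonal projection of $\ve{c}$ onto the solution set of~\eqref{AC}.

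For the four refinements I would bound the covering parameter $\beta = \max_{\tau \in T}\lambda_{\max}(\ve{Q}_\tau\ve{Q}_\tau^\top)$ case by case, using that the nonzero spectrum of $\ve{Q}_\tau\ve{Q}_\tau^\top$ agrees with that of the induced-subgraph Laplacian $\ve{Q}_\tau^\top\ve{Q}_\tau = L_\tau$. When $\tau$ is an independent edge set no two of its edges meet, so $\ve{Q}_\tau\ve{Q}_\tau^\top = 2I$ and $\beta = 2$, giving case (1). When $\tau$ is a path on $M+1$ vertices, $L_\tau$ is the Laplacian of the path graph $P_{M+1}$, whose eigenvalues $2 - 2\cos(k\pi/(M+1))$ are classical; the largest, $2 - 2\cos(M\pi/(M+1))$, lies strictly below $4$, yielding case (2).

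The steps I expect to demand the most care are the clique case (3) and the general connected case (4), both of which rest on the observation flagged in the introduction: the projector $I - \ve{Q}_\tau^\dagger\ve{Q}_\tau$ depends on $\tau$ only through $\ker\ve{Q}_\tau$, hence only through which nodes the block connects and not through the specific edges used. Consequently I may replace any connected block by a spanning tree of it without altering a single iterate, and am therefore free to choose the spanning tree minimizing $\lambda_{\max}(L_\tau)$. For a clique this means substituting a Hamiltonian path, which collapses case (3) onto the path bound of case (2); the gain is decisive, since a clique's own Laplacian has largest eigenvalue equal to its number of vertices, which grows without bound, whereas the replacement path keeps $\beta$ below $4$. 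For an arbitrary connected block on at most $M$ edges I would instead fall back on the generic spanning-tree estimate $\lambda_{\max}(L_\tau) \le (\text{number of vertices})$, which is at most $M+1$, to obtain case (4) up to the standard bookkeeping relating a block's edge count to its vertex count. The genuinely delicate points are establishing the projector-invariance claim rigorously, so that swapping edges is legitimate, and confirming that each surrogate subgraph still constitutes a valid row covering with the same parameters $d$ and $r$, so that the general theorem of Section~\ref{sec:proof} applies without modification.
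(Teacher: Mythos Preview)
Your proposal is correct and follows essentially the same route as the paper: identify block gossip with block Kaczmarz on $\ve{Q}\ve{x}=\ve{0}$, invoke Theorem~\ref{thm:main} with $\ve{e}=\ve{0}$ and $\sigma_{\min+}^2(\ve{Q})=\alpha(\mathcal{G})$, and then bound $\beta$ case-by-case via the Laplacian spectrum, using the spanning-tree replacement (the paper's Lemma~\ref{spanninglemma}) for cliques and general connected blocks. The only cosmetic difference is that the paper outsources the ``gossip step $=$ Kaczmarz projection'' identification to \cite[Theorem~5]{loizou2019revisiting}, whereas you argue it directly from $\ker\ve{Q}_\tau$ being the component-constant vectors; your flagged concern about whether the surrogate spanning trees still form a valid row covering with the same $r$ is well taken and is a point the paper also treats lightly.
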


\begin{algorithm}[H]
	\caption{Block Gossip Method}
	\begin{algorithmic}[1]
		\Procedure{BG}{$\mathcal{G},\ve{c}_0 = \ve{c},T = \{\tau_1,\cdots,\tau_d\}$}
		\State $k = 0$
		\Repeat
		\State $k \leftarrow k+1$
		\State Choose edge subset $\tau$ uniformly at random from elements of $T$. \label{line:selection}
		\State Form $\mathcal{G}_{\tau} = (\mathcal{V}_{\tau},\mathcal{E}_{\tau})$, the edge-induced subgraph of $\mathcal{G}$ defined by edges in $\tau$. 
		\State {Nodes outside of $\mathcal{V}_\tau$ do not update secret values, $(\ve{c}_k)_{\mathcal{V}_{\tau}^C} \leftarrow (\ve{c}_{k-1})_{\mathcal{V}_{\tau}^C}$.}
		\ForAll{connected components $\mathcal{G}' = (\mathcal{V}',\mathcal{E}')$ of $\mathcal{G}_\tau$}
		\State Nodes in $\mathcal{V}'$ solve average consensus on $\mathcal{G}'$, $(\ve{c}_k)_{\mathcal{V}'} \leftarrow \left[\frac{1}{|\mathcal{V}'|}\sum_{i\in\mathcal{V}'} (\ve{c}_k)_i \right] \ve{1}_{|\mathcal{V}'|}$. 
		\EndFor
		\Until{stopping criterion reached}
		\State \textbf{return} $\ve{c}_k$ 
		\EndProcedure
	\end{algorithmic}
	\label{alg:BG}
\end{algorithm}

\begin{figure}[H]
     \begin{subfigure}[b]{0.3\textwidth}
         \centering
         \includegraphics[width=0.9\textwidth]{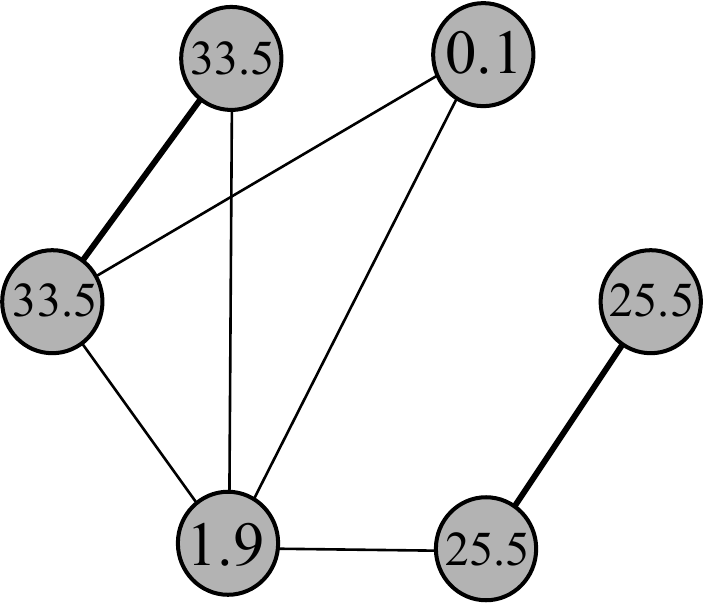}
         \caption{One iteration of block gossip with a block corresponding to an independent edge set of size 2.}
         \label{fig:AC2}
     \end{subfigure}\hfill
     \begin{subfigure}[b]{0.3\textwidth}
         \centering
         \includegraphics[width=0.9\textwidth]{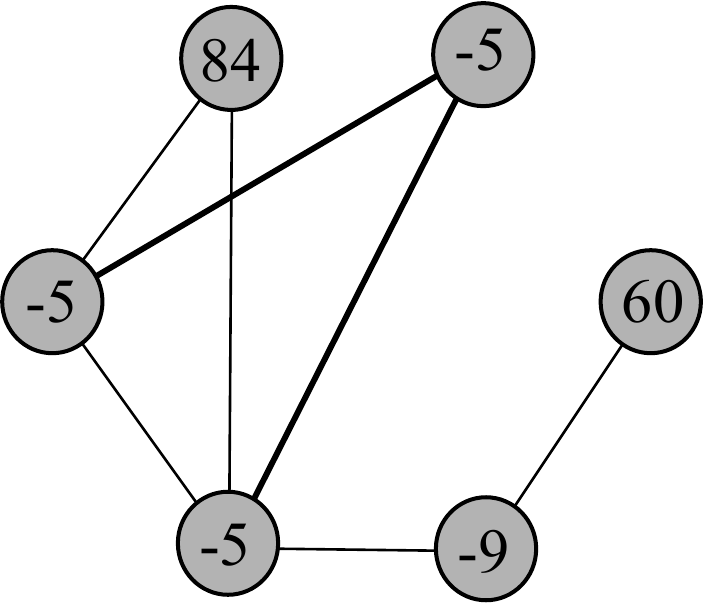}
         \caption{One iteration of block gossip with a block corresponding to a path of length 2.}
         \label{fig:AC3}
     \end{subfigure}\hfill
     \begin{subfigure}[b]{0.3\textwidth}
         \centering
         \includegraphics[width=0.9\textwidth]{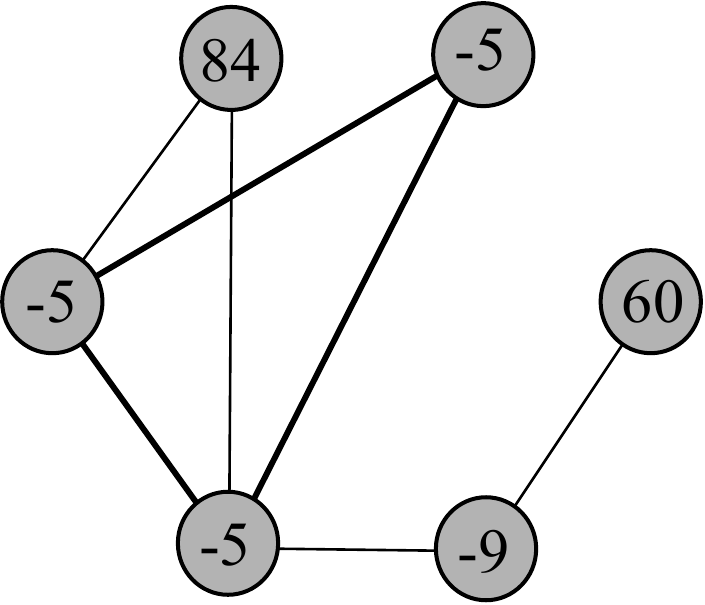}
         \caption{One iteration of block gossip with a block corresponding to a clique of size 3.}
         \label{fig:AC4}
     \end{subfigure}
    \caption{The average consensus problem of Figure~\ref{fig:AC1} after an iteration of the block gossip method (Algorithm~\ref{alg:BG}) with various types of block structures. The edges defining the sampled block are represented by bold lines.}
    \label{fig:BGsteps}
\end{figure}

This result follows from a new bound on the convergence rate of the \emph{block Kaczmarz method} on a potentially rank-deficient least-squares problem.  The block Kaczmarz method samples blocks of rows of the matrix $\ve{A}$ in each iteration and performs an update which projects the previous iterate onto the solution space of the subset of sampled equations; note that the standard single-row Kaczmarz updates are a special case of block Kaczmarz with block size one.  The details of this method are provided in Algorithm~\ref{alg:BK}.  The block gossip method with blocks $T$ produces the same iterates as the block Kaczmarz method performed with $\ve{A} = \ve{Q}, \ve{b} = \ve{0}$, and $\ve{x}_0 = \ve{c}$ with blocks $T$.  


\begin{algorithm}
	\caption{Block Kaczmarz Method}
	\begin{algorithmic}[1]
		\Procedure{BK}{$\ve{A},\ve{b},\ve{x}_0,T = \{\tau_1,\cdots,\tau_d\}$}
		\State $k = 0$
		\Repeat
		\State $k \leftarrow k+1$
		\State Choose row block $\tau$ uniformly at random from $T$. \label{line:selection}
		\State $\ve{x}_k \leftarrow \ve{x}_{k-1} + \ve{A}_{\tau}^\dagger(\ve{b}_\tau - \ve{A}_\tau\ve{x}_{k-1})$
		\Until{stopping criterion reached}
		\State \textbf{return} $\ve{x}_k$ 
		\EndProcedure
	\end{algorithmic}
	\label{alg:BK}
\end{algorithm}

Our main result regarding the block Kaczmarz method generalizes the main result of~\cite{needell2013paved} in several ways:
\begin{itemize}
    \item Generalizes to the case when the least-squares problem is rank-deficient.
    \item Relaxes the requirement that the row blocks be sampled from a matrix paving.
    \item Demonstrates that the convergence horizon depends upon the minimum \emph{nonzero} singular value of the blocks $\ve{A}_\tau$ rather the absolute minimum singular value (often 0).
\end{itemize}
These generalizations are important in our main application to the average consensus problem and block gossip methods, but are likely of independent interest in other applications.

\begin{theorem}\label{thm:main}
Consider the least-squares problem $$\min \|\ve{A}\ve{x} - \ve{b}\|^2$$ where $\ve{A} \in \mathbb{R}^{m \times n}$ is not necessarily full-rank and $\ve{b} \in \mathbb{R}^m$.  
Let $\ve{e} = A\ve{x}_{\ve{e}} - \ve{b}$ for some $\ve{x}_{\ve{e}}$ and
let $\{\tau_1, \cdots, \tau_d\}$ be a $(d, \alpha, \beta, r, R)$ covering (not necessarily a paving) of the rows of $\ve{A}$. Let $\ve{x}_j$ denote the $j$th iterate produced by Block RK on the system defined by $\ve{A}$ and $\ve{b}$ with initial iterate $\ve{x}_0$, and let $\ve{x}^\ast := \left(\ve{I} - \ve{A}^\dagger \ve{A}\right)\ve{x}_0 + \ve{A}^{\dagger}(\ve{b} + \ve{e}) = \left(\ve{I} - \ve{A}^\dagger \ve{A}\right)\ve{x}_0 + \ve{A}^{\dagger}\ve{A}\ve{x}_{\ve{e}}$. 
Then we have
\begin{equation}
    \mathbb{E}\left(\norm{\ve{x}_{j} - \ve{x}^\ast}^2\right) \leq \left(1 - \frac{r\sigma_{\min +}^2(\ve{A})}{\beta d}\right)^{j} \norm{\ve{x}_0 - \ve{x}^\ast}^2 + \frac{\beta R}{\alpha r \sigma_{\min +}^2(\ve{A})} \norm{\ve{e}}^2,
\end{equation}
where $\sigma_{\min +}(\ve{A})$ is the smallest nonzero singular value of $\ve{A}$.  
\end{theorem}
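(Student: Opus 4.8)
The plan is to track the error vector $\ve{d}_k := \ve{x}_k - \ve{x}^\ast$ and establish a one-step recursion of the form $\mathbb{E}_\tau \norm{\ve{d}_k}^2 \le \rho \norm{\ve{d}_{k-1}}^2 + C$, which unrolls into the claimed geometric bound. The conceptual heart of the argument, and the key to handling rank-deficiency, is the invariant that every error vector $\ve{d}_k$ lies in the row space $\mathrm{range}(\ve{A}^\top)$. First I would observe that each Block RK update moves $\ve{x}_k$ along a direction $\ve{A}_\tau^\dagger(\cdots) \in \mathrm{range}(\ve{A}_\tau^\top) \subseteq \mathrm{range}(\ve{A}^\top)$, so the component of the iterate in $\ker(\ve{A}) = \mathrm{range}(\ve{A}^\top)^\perp$ is frozen at its initial value $(\ve{I}-\ve{A}^\dagger\ve{A})\ve{x}_0$. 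Since $\ve{x}^\ast$ is defined to carry exactly this same kernel component (the term $\ve{A}^\dagger(\ve{b}+\ve{e})$ lies in $\mathrm{range}(\ve{A}^\top)$ and is annihilated by the kernel projector), the difference $\ve{d}_k$ has zero kernel component for all $k$, i.e. $\ve{d}_k \in \mathrm{range}(\ve{A}^\top)$. This is precisely what will allow $\sigma_{\min+}(\ve{A})$ rather than $\sigma_{\min}(\ve{A}) = 0$ to appear in the rate.

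Next I would derive the recursion itself. Writing $\ve{b} = \ve{A}\ve{x}_{\ve{e}} - \ve{e}$ and using $\ve{A}(\ve{I}-\ve{A}^\dagger\ve{A}) = \ve{0}$ together with $\ve{A}\ve{A}^\dagger\ve{A} = \ve{A}$, a short computation gives $\ve{A}_\tau \ve{x}^\ast = \ve{A}_\tau \ve{x}_{\ve{e}} = \ve{b}_\tau + \ve{e}_\tau$, so the residual entering the update simplifies to $\ve{b}_\tau - \ve{A}_\tau \ve{x}_{k-1} = -\ve{A}_\tau \ve{d}_{k-1} - \ve{e}_\tau$. Substituting into the Block RK step yields $\ve{d}_k = (\ve{I}-\ve{A}_\tau^\dagger\ve{A}_\tau)\ve{d}_{k-1} - \ve{A}_\tau^\dagger\ve{e}_\tau$. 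The two summands are orthogonal, since the first lies in $\ker(\ve{A}_\tau)$ and the second in $\mathrm{range}(\ve{A}_\tau^\top)$, so Pythagoras gives the clean split $\norm{\ve{d}_k}^2 = \norm{(\ve{I}-\ve{A}_\tau^\dagger\ve{A}_\tau)\ve{d}_{k-1}}^2 + \norm{\ve{A}_\tau^\dagger\ve{e}_\tau}^2$, separating a contraction term from a noise term.

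I would then take expectation over the uniform choice of block $\tau$ and bound each piece using the covering parameters. For the contraction term, since $\ve{I}-\ve{A}_\tau^\dagger\ve{A}_\tau$ is an orthogonal projection and $\lambda_{\max}(\ve{A}_\tau\ve{A}_\tau^\top) \le \beta$, an SVD computation gives $\norm{(\ve{I}-\ve{A}_\tau^\dagger\ve{A}_\tau)\ve{d}_{k-1}}^2 \le \norm{\ve{d}_{k-1}}^2 - \tfrac{1}{\beta}\norm{\ve{A}_\tau\ve{d}_{k-1}}^2$. Averaging over blocks and swapping the order of summation, each row index is counted between $r$ and $R$ times, so $\mathbb{E}_\tau\norm{\ve{A}_\tau\ve{d}_{k-1}}^2 \ge \tfrac{r}{d}\norm{\ve{A}\ve{d}_{k-1}}^2$; here the row-space invariant lets me conclude $\norm{\ve{A}\ve{d}_{k-1}}^2 \ge \sigma_{\min+}^2(\ve{A})\norm{\ve{d}_{k-1}}^2$. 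For the noise term, $\lambda_{\min+}(\ve{A}_\tau\ve{A}_\tau^\top) \ge \alpha$ gives $\norm{\ve{A}_\tau^\dagger\ve{e}_\tau}^2 \le \tfrac{1}{\alpha}\norm{\ve{e}_\tau}^2$, and the same counting argument with the upper bound $R$ yields $\mathbb{E}_\tau\norm{\ve{A}_\tau^\dagger\ve{e}_\tau}^2 \le \tfrac{R}{\alpha d}\norm{\ve{e}}^2$.

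Combining, I obtain $\mathbb{E}_\tau\norm{\ve{d}_k}^2 \le \bigl(1 - \tfrac{r\sigma_{\min+}^2(\ve{A})}{\beta d}\bigr)\norm{\ve{d}_{k-1}}^2 + \tfrac{R}{\alpha d}\norm{\ve{e}}^2$. Taking total expectations, iterating this linear recursion, and bounding the resulting geometric series $\sum_{i=0}^{j-1}\rho^i$ by $\tfrac{1}{1-\rho}$ with $\rho = 1 - \tfrac{r\sigma_{\min+}^2(\ve{A})}{\beta d}$ produces the stated horizon $\tfrac{R/(\alpha d)}{r\sigma_{\min+}^2(\ve{A})/(\beta d)}\norm{\ve{e}}^2 = \tfrac{\beta R}{\alpha r \sigma_{\min+}^2(\ve{A})}\norm{\ve{e}}^2$, completing the bound. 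I expect the main obstacle to be the bookkeeping that justifies the two departures from the classical paved analysis of \cite{needell2013paved}: maintaining the row-space invariant carefully enough to replace $\sigma_{\min}$ with $\sigma_{\min+}$ throughout, and correctly tracking the per-row multiplicities $r$ and $R$ when passing from a paving (each row exactly once) to a general covering.
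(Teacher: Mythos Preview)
Your proposal is correct and follows essentially the same approach as the paper: the orthogonal splitting of $\ve{d}_k$ into the projection term and the noise term $\ve{A}_\tau^\dagger\ve{e}_\tau$, the row-space invariant $\ve{d}_k\in\operatorname{Im}(\ve{A}^\top)$ to replace $\sigma_{\min}$ by $\sigma_{\min+}$, the $r/R$ row-multiplicity counting for the covering, and the geometric-series unrolling are all exactly what the paper does. The only cosmetic difference is ordering---you establish the row-space invariant up front, whereas the paper interleaves it with the recursion---but the logical content is identical.
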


\begin{remark}
The convergence horizon term, $\frac{\beta R}{\alpha r \sigma_{\min +}^2(\ve{A})} \norm{\ve{e}}^2$, is minimized in the case that $\ve{e} = \ve{e}^*$ where $\ve{e}^*$ is the minimum norm residual, i.e., $$\ve{e}^* = \text{argmin } \|\ve{e}\|^2 \text{ s.t. } \ve{e} = \ve{A}\ve{x} - \ve{b} \text{ for some } \ve{x} \in \mathbb{R}^n.$$ In this case, the iterates converge to $\ve{x}^\ast := \left(\ve{I} - \ve{A}^\dagger \ve{A}\right)\ve{x}_0 + \ve{A}^{\dagger}(\ve{b} + \ve{e}^*)$.
\end{remark}

\begin{remark}
In other Kaczmarz literature, including Needell's original result for Randomized Kaczmarz applied to inconsistent linear systems~\cite[Theorem 2.1]{Nee10:Randomized-Kaczmarz}, the problem is formulated by adding some vector of measurement noise $\ve{r}$ to a consistent system $\ve{A}\ve{x} = \ve{b}$, leading to a convergence horizon term proportional to $\norm{\ve{r}}$. We instead work with $\ve{A}\ve{x} = \ve{b}$ being inconsistent and give a horizon proportional to a residual, but we note that the two formulations are equivalent.
\end{remark}

\begin{remark}
We include a generalization of Theorem~\ref{thm:main} to the case when the right hand side, $\ve{b}$, is varying in each iteration according to mean-zero randomly distributed additive noise in Proposition~\ref{prop:randomNoiseBRK}.
\end{remark}

\subsection{Related Work} \label{subsec:relatedwork}

In this subsection we offer some related reading in the fields of average consensus, gossip protocols, and block iterative methods, and draw connections between them and our work.

\indent \textbf{Average consensus and gossip protocols.} As mentioned, average consensus has been a fundamental topic in distributed computing since the inception of the field. We refer the reader to the classical work of DeGroot \cite{degroot1974} for an inception of the consensus problem, and to the work of Tsitsiklis, Bertsekas, and Athans \cite{tsitsiklis1986} for a first look into stochastic protocols for distributed computing. As networks have grown larger in size and have appeared in more applications, the need for more efficient average consensus solvers motivated the development of \emph{gossip algorithms}: protocols that, in general, select some subset of nodes and allow them to `gossip', i.e., share and average their stored values. Boyd et al.'s 2006 paper~\cite{Boyd2006RandomizedGA} provided a fundamental exposition of said protocols, particularly on the connection between their convergence rate and the underlying network topology, and has motivated research in the topic ever since~\cite{dimakis2010survey}.

There have been a variety of works analyzing different node-selection protocols for gossiping. In \cite{pathgossip}, at each epoch a path of nodes in the network is formed and the values along said path are averaged. In~\cite{cliquegossip}, the network is decomposed beforehand into cliques (connected subgraphs), and at each epoch one such clique is activated. Lastly in~\cite{Boyd2006RandomizedGA}, at each epoch a selection of pairs are chosen and each pair computes its own average (we call this \textit{edge independent set} gossiping throughout). These analyses are, however, somewhat disjoint, so we believe the unified convergence analysis presented in this paper (which covers all of the aforementioned methods) to be novel.

\textbf{Block randomized Kaczmarz.} The Kaczmarz method \cite{ogkacz} is an iterative linear system solver whose popularity boomed after its randomized variant was proven to have exponential convergence by Strohmer and Vershynin~\cite{vershstrohkacz}. After this work proved convergence for full-rank, consistent systems, further work was done to generalize to the case of inconsistent~\cite{Nee10:Randomized-Kaczmarz} and rank-deficient~\cite{rek} systems. 

A well-studied family of variants are \textit{block Kaczmarz methods}, in which iterates are projected onto subspaces corresponding to blocks of rows rather than single equations. Early references include \cite{Eggermont1981IterativeAF, Elf80:Block-Iterative-Methods, Pop99:Block-Projections-Algorithms}, but we focus on the block randomized Kaczmarz method introduced by Needell and Tropp \cite{needell2013paved}. The authors proved that under certain restrictions on the choice of blocks, the method achieves exponential convergence, and converges up to a threshold if the system is inconsistent. In our work we significantly relax these conditions and achieve a similar convergence guarantee.

The connection between gossip algorithms and Kaczmarz methods for linear systems was analyzed in depth by Loizou and Richt\'arik in their 2019 paper \cite{loizou2019revisiting}; others considering this connection include~\cite{HM19Greed,zouzias2015randomized}. This connection was exploited in~\cite{loizou2019revisiting} to build a framework giving new convergence guarantees for gossip protocols, accelerations via momentum, and other interesting discussions such as dual gossip algorithms. In our work we hone in on their general exposition of block gossip algorithms, and give more explicit links and convergence guarantees for previously mentioned existing gossip protocols.

\section{Convergence of Block Randomized Kaczmarz}\label{sec:proof}


In this section, we prove our main result, Theorem~\ref{thm:main}, which illustrates that the block randomized Kaczmarz method converges at least linearly in expectation on least-squares problems even in the case that the matrix $\ve{A}$ is rank-deficint and the blocks are not sampled from a matrix paving.  We will then illustrate how this result specializes to prove Corollary~\ref{cor:BG}.

\begin{proof}[Proof of Theorem~\ref{thm:main}]
    First, note that by definition, $\ve{e} = \ve{A}\ve{x}_{\ve{e}} - \ve{b}$ and so
    \begin{align}\label{eq:errorcalc}
        \ve{A}\ve{x}^* - \ve{b} &= (\ve{A}\ve{x}_0 - \ve{A}\ve{A}^\dagger\ve{A}\ve{x}_0) + \ve{A}\ve{A}^\dagger\ve{b} + \ve{A}\ve{A}^\dagger\ve{e} - \ve{b}\nonumber
        \\&= \ve{A}\ve{A}^\dagger\ve{b} + \ve{A}\ve{A}^\dagger(\ve{A}\ve{x}_\ve{e} - \ve{b}) - \ve{b}
        \\&= \ve{A}\ve{x}_\ve{e} - \ve{b} = \ve{e},\nonumber
    \end{align}
    where the second and third equation used the fact that $\ve{A}\ve{A}^\dagger \ve{A} = \ve{A}$. 
    
   Recall that our updates take the form $\ve{x}_{j+1} = \ve{x}_j + \ve{A}^\dagger_{\tau}(\ve{b}_\tau - \ve{A}_\tau \ve{x}_{j})$, where $\tau$ is chosen uniformly at random from our set of blocks. We then have
   \begin{align}\label{eq:errorsquared}
       \norm{\ve{x}_{j+1} - \ve{x}^\ast}^2 &= \norm{\ve{x}_j + \ve{A}^\dagger_\tau (\ve{b}_\tau - \ve{A}_\tau \ve{x}_{j}) - \ve{x}^\ast}^2 \nonumber\\
       &= \norm{(\ve{I} - \ve{A}_\tau^\dagger \ve{A}_\tau)\ve{x}_j + \ve{A}_{\tau}^\dagger (\ve{A}_\tau \ve{x}^* - \ve{e}_\tau) - \ve{x}^*}^2\\
       &= \norm{(\ve{I} - \ve{A}^\dagger_\tau \ve{A}_\tau)(\ve{x}_j - \ve{x}^\ast) - \ve{A}_\tau^\dagger \ve{e}_\tau}^2 \nonumber\\
       &= \norm{(\ve{I} - \ve{A}^\dagger_\tau \ve{A}_\tau)(\ve{x}_j - \ve{x}^\ast)}^2 + \norm{\ve{A}_\tau^\dagger \ve{e}_\tau}^2,\nonumber
   \end{align}
   where we used that $\ve{A}^\dagger_\tau (\ve{b}_\tau + \ve{e}_\tau) = \ve{A}^\dagger_\tau \ve{A}_\tau \ve{x}^\ast$ and that $\operatorname{Im}(\ve{A}_\tau^\dagger)$ and $\operatorname{Im}(\ve{I} - \ve{A}_\tau^\dagger \ve{A}_\tau)$ are orthogonal. Then since $\ve{I} - \ve{A}^\dagger_\tau \ve{A}_\tau$ is an orthogonal projector, we have
   \begin{equation}\label{eq:breakuperror}
       \norm{(\ve{I} - \ve{A}^\dagger_\tau \ve{A}_\tau)(\ve{x}_j - \ve{x}^\ast)}^2 = \norm{\ve{x}_j - \ve{x}^\ast}^2 - \norm{\ve{A}^\dagger_\tau \ve{A}_\tau(\ve{x}_j - \ve{x}^\ast)}^2.
   \end{equation}
   Note that $\norm{\ve{A}_\tau^\dagger \ve{e}_\tau}^2 \le \sigma_{\max}^2(\ve{A}_\tau^\dagger) \norm{\ve{e}_\tau}^2 = \frac{1}{\sigma_{\min +}^2(\ve{A}_\tau)} \norm{\ve{e}_\tau}^2 \le \frac{1}{\alpha}\norm{\ve{e}_\tau}^2$. 
   Using this fact and taking expectations, we obtain
   \begin{align}
   \label{errorestim}
       \mathbb{E}_j\left(\norm{\ve{x}_{j+1} - \ve{x}^\ast}^2\right) &\le \norm{\ve{x}_j - \ve{x}^\ast}^2 - \mathbb{E}_j\left(\norm{\ve{A}^\dagger_\tau \ve{A}(\ve{x}_j - \ve{x}^\ast)}^2\right) + \frac{1}{\alpha} \mathbb{E}_j \left(\norm{\ve{e}_\tau}^2\right)\nonumber\\
       &\leq \norm{\ve{x}_j - \ve{x}^\ast}^2 - \frac{1}{\beta}\mathbb{E}_j \left(\norm{\ve{A}_\tau(\ve{x}_j - \ve{x}^\ast)}^2\right) + \frac{1}{\alpha} \mathbb{E}_j \left(\norm{\ve{e}_\tau}^2\right)\\
       &\le \norm{\ve{x}_j - \ve{x}^\ast}^2 - \frac{r}{\beta d}\norm{\ve{A}(\ve{x}_j - \ve{x}^\ast)}^2 + \frac{R}{\alpha d} \norm{\ve{e}}^2,\nonumber
   \end{align}
   where the last inequality follows from the fact that $$\mathbb{E}_j(\norm{\ve{v}_\tau}^2) = \frac{1}{d} \sum_{l=1}^d \norm{\ve{v}_{\tau_l}}^2 = \frac{1}{d} \sum_{l=1}^d \sum_{i=1}^m \ve{1}(i \in \tau_l) v_i^2 = \frac{1}{d} \sum_{i=1}^m \left[\sum_{l=1}^d \ve{1}(i\in \tau_l)\right] v_i^2$$ so we have $\frac{r}{d}\norm{\ve{v}}^2 \le \mathbb{E}_j(\norm{\ve{v}_\tau}^2) \le \frac{R}{d}\norm{\ve{v}}^2$.
   
   We now claim that for all $j$, $\ve{x}_j - \ve{x}^\ast \in \operatorname{Im}(\ve{A}^T)$, i.e., the row space of $\ve{A}$. We do so by induction; firstly for $j=0$ we have
   \begin{equation*}
       \ve{x}_0 - \ve{x}^\ast = \ve{x}_0 - (\ve{I} - \ve{A}^\dagger \ve{A})\ve{x}_0 - \ve{A}^\dagger (\ve{b} + \ve{e}) = \ve{A}^\dagger(\ve{A}\ve{x}_0 - (\ve{b} + \ve{e})),
   \end{equation*}
   and since $\operatorname{Im}(\ve{A}^\dagger) = \operatorname{Im}(\ve{A}^T)$, we are done.
   
   Now assume $\ve{x}_l - \ve{x}^\ast \in \operatorname{Im}(\ve{A}^T)$. Then we have, for some $\tau$, 
   \begin{equation*}
       \ve{x}_{l+1} - \ve{x}^\ast = \ve{x}_l + \ve{A}_\tau^\dagger(\ve{b}_\tau - \ve{A}_\tau \ve{x}_l) - \ve{x}^\ast.
   \end{equation*}
   By assumption $\ve{x}_l - \ve{x}^\ast \in \operatorname{Im}(\ve{A}^T)$, and furthermore since $\ve{A}_\tau$ is a row submatrix of $\ve{A}$, we have
   \begin{equation*}
       \operatorname{Im}(\ve{A}^\dagger_\tau) = \operatorname{Im}(\ve{A}^T_\tau) \subseteq \operatorname{Im}(\ve{A}^T).
   \end{equation*}
   Thus $\ve{x}_{l+1} - \ve{x}^\ast \in \operatorname{Im}(A^T)$, and we are done by induction.
   
   Now, returning to \eqref{errorestim}, since $\ve{x}_j - \ve{x}^\ast \in \operatorname{Im}(\ve{A}^T) = \operatorname{Ker}(\ve{A})^\perp$, we have
   \begin{equation*}
       \norm{\ve{A}(\ve{x}_j - \ve{x}^\ast)}^2 \geq \sigma^2_{\min +}(\ve{A})\norm{\ve{x}_j - \ve{x}^\ast}.
   \end{equation*}
   This yields
   \begin{equation*}
       \mathbb{E}_j\left(\norm{\ve{x}_{j+1} - \ve{x}^\ast}^2\right) \leq \left(1 - \frac{r\sigma_{\min +}^2(\ve{A})}{\beta d}\right)\norm{\ve{x}_j - \ve{x}^\ast}^2 + \frac{R}{\alpha d} \norm{\ve{e}}^2,
   \end{equation*}
   and by induction we obtain
   \begin{align}\label{eq:induction}
       \mathbb{E}\left(\norm{\ve{x}_{j+1} - \ve{x}^\ast}^2\right) &\leq \left(1 - \frac{r\sigma_{\min +}^2(\ve{A})}{\beta d}\right)^{j+1}\norm{\ve{x}_0 - \ve{x}^\ast}^2 + \left[\sum_{i=0}^j \left(1 - \frac{r\sigma_{\min +}^2(\ve{A})}{\beta d}\right)^i\right] \frac{R}{\alpha d} \norm{\ve{e}}^2 \nonumber\\
       &\le \left(1 - \frac{r\sigma_{\min +}^2(\ve{A})}{\beta d}\right)^{j+1}\norm{\ve{x}_0 - \ve{x}^\ast}^2 + \left[\sum_{i=0}^\infty \left(1 - \frac{r\sigma_{\min +}^2(\ve{A})}{\beta d}\right)^i\right] \frac{R}{\alpha d} \norm{\ve{e}}^2 \\
       &= \left(1 - \frac{r\sigma_{\min +}^2(\ve{A})}{\beta d}\right)^{j+1}\norm{\ve{x}_0 - \ve{x}^\ast}^2 + \frac{\beta R}{\alpha r \sigma_{\min +}^2(\ve{A})} \norm{\ve{e}}^2.\nonumber
   \end{align}
\end{proof}

\begin{remark}
We note that in most applications, including our application of average consensus, $\sigma_{\min +}(\ve{A})$ is fixed, and so it is natural to seek to maximize $\frac{r}{\beta d}$. For the average consensus problem, this equates to careful selection of the block set $T$.
\end{remark}


We next include a proof of Corollary~\ref{cor:BG} which follows from Theorem~\ref{thm:main} due to the fact that block gossip on the network $\mathcal{G}$ with initial secret node values $\ve{c}$ coincides with block randomized Kaczmarz on the problem~\eqref{AC}.
\begin{proof}[Proof of Corollary~\ref{cor:BG}]
Consider running block RK with $\ve{A} = \ve{Q}, \ve{b} = \ve{0}$, and $\ve{x}_0 = \ve{c}$ with samples $\tau \in T$ determined by the run of block gossip on $\mathcal{G}$ with $\ve{c}_0 = \ve{c}$.  Note that $\operatorname{ker}(\ve{Q})$ is nonempty since $\operatorname{rank}(\ve{Q}) \le n-1$, so $\ve{e} = \ve{0}$.  It follows from \cite[Theorem 5]{loizou2019revisiting} that block gossip iterate $\ve{c}_k$ and block RK iterate $\ve{x}_k$ coincide for all $k$.


If we take $\ve{x}_0 = \ve{c}$, then by Theorem \ref{thm:main} we know that block RK will converge to $(\ve{I} - \ve{Q}^\dagger \ve{Q})\ve{c}$ (since $\ve{b} = \ve{e} = \ve{0}$ in this application). This is exactly the orthogonal projection of $\ve{c}$ onto $\operatorname{ker}(\ve{Q}) = \operatorname{span}\{\ve{1}\}$, where $\ve{1}$ is the length-$|\mathcal{E}|$ vector of all ones. Then, since $\left\{\frac{1}{\sqrt{|\mathcal{E}|}}\ve{1}\right\}$ is an orthonormal basis for $\operatorname{ker}(\ve{Q})$, we can compute this projection as 
\begin{align}
    (\ve{I} - \ve{Q}^\dagger \ve{Q})\ve{c} &= \left\langle \ve{c}, \frac{1}{\sqrt{|\mathcal{E}|}}\ve{1} \right\rangle \frac{1}{\sqrt{|\mathcal{E}|}}\ve{1} \\
    &= \frac{1}{|\mathcal{E}|}\left(\sum_{i=1}^{|\mathcal{E}|} c_i\right) \ve{1} \\
    &= \ve{c}^\ast.
\end{align}
%
Finally, note that $\sigma_{\min +}^2(\ve{Q}) = \lambda_{\min +}(\ve{L}) = \alpha(\mathcal{G})$ where $\ve{L}$ is the Laplacian matrix of $\mathcal{G}$.  The specific results enumerated in Corollary~\ref{cor:BG} follow from the singular value upper bounds presented in Section~\ref{sec:BGsampling}.
\end{proof}



\section{Block Gossip Sampling}\label{sec:BGsampling}

In this section, we consider particular cases when the blocks used in the block gossip method correspond to special subgraph structures, namely independent edge sets, clique subgraphs, path subgraphs, and arbitrary connected subgraphs.


We begin with a lemma that will be used to strengthen our convergence results for these cases.

\begin{lemma}
\label{spanninglemma}
Let $\tau$ be a subset of edges of $\mathcal{G}$, and let $\tau' \subseteq \tau$ be the edge set of a spanning tree of $\mathcal{G}_\tau$. Then the block gossip updates produced by choosing blocks $\tau$ and $\tau'$ are identical.
\end{lemma}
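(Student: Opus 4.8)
The plan is to exploit the block-Kaczmarz characterization of the gossip update established earlier in the excerpt. Recall that running Algorithm~\ref{alg:BG} with block $\tau$ on current values $\ve{c}_{k-1}$ coincides with the block Kaczmarz update (Algorithm~\ref{alg:BK}) applied to $\ve{A} = \ve{Q}$, $\ve{b} = \ve{0}$, so the resulting iterate is
$\ve{c}_{k-1} + \ve{Q}_\tau^\dagger(\ve{0} - \ve{Q}_\tau \ve{c}_{k-1}) = (\ve{I} - \ve{Q}_\tau^\dagger \ve{Q}_\tau)\ve{c}_{k-1}$,
which is precisely the orthogonal projection of $\ve{c}_{k-1}$ onto $\ker(\ve{Q}_\tau)$. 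Since an orthogonal projection is completely determined by the subspace onto which it projects, it suffices to prove that $\ker(\ve{Q}_\tau) = \ker(\ve{Q}_{\tau'})$; the updates for $\tau$ and $\tau'$ then coincide on every input, which is exactly the claim.

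To identify these kernels I would characterize them combinatorially. A vector $\ve{x}$ lies in $\ker(\ve{Q}_\tau)$ if and only if $x_i = x_j$ for every edge $e_{ij} \in \tau$, equivalently if and only if $\ve{x}$ is constant on each connected component of the edge-induced subgraph $\mathcal{G}_\tau$ (with unconstrained entries on vertices untouched by $\tau$). The identical statement holds with $\tau$ replaced by $\tau'$. Hence both $\ker(\ve{Q}_\tau)$ and $\ker(\ve{Q}_{\tau'})$ are determined entirely by the partition of the relevant vertex set into connected components, and equality of the kernels reduces to equality of these two partitions.

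The heart of the argument is then the elementary graph-theoretic fact that passing to a spanning tree (or, when $\mathcal{G}_\tau$ is disconnected, a spanning forest) leaves this partition unchanged. Because $\tau' \subseteq \tau$ is the edge set of a spanning tree of $\mathcal{G}_\tau$, the edge-induced subgraph $\mathcal{G}_{\tau'}$ has the same vertex set $\mathcal{V}_\tau$ as $\mathcal{G}_\tau$, and two vertices are joined by a path in $\mathcal{G}_{\tau'}$ exactly when they are joined by a path in $\mathcal{G}_\tau$. Consequently $\mathcal{G}_\tau$ and $\mathcal{G}_{\tau'}$ induce the same component partition of $\mathcal{V}_\tau$, so $\ker(\ve{Q}_\tau) = \ker(\ve{Q}_{\tau'})$ and the two projectors $\ve{I} - \ve{Q}_\tau^\dagger \ve{Q}_\tau$ and $\ve{I} - \ve{Q}_{\tau'}^\dagger \ve{Q}_{\tau'}$ agree, completing the proof.

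I expect the only (modest) obstacle to be bookkeeping: making precise that the vertex sets agree and that no vertex touched by $\tau$ is left isolated by $\tau'$, together with the minor convention that ``spanning tree'' should be read as ``spanning forest'' when $\mathcal{G}_\tau$ is disconnected (for instance for independent-edge-set blocks, where $\tau' = \tau$ and the claim is immediate). One could instead bypass the kernel computation and argue directly from the averaging description in Algorithm~\ref{alg:BG}, since both blocks replace the values on each connected component by the same per-component average once the components are known to coincide; but the projection formulation makes the equality of updates most transparent and ties the lemma cleanly to Theorem~\ref{thm:main}.
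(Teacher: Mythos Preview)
Your argument is correct. It is, however, a more elaborate route than the one the paper takes. The paper's proof is a single sentence: since $\tau'$ is the edge set of a spanning tree of $\mathcal{G}_\tau$, the two edge-induced subgraphs share the same vertex set (and hence the same connected components), and Algorithm~\ref{alg:BG} computes its update purely from the per-component averages of those vertices, so the update is the same. In other words, the paper invokes exactly the ``direct'' argument you mention in your final paragraph as an alternative. Your approach instead passes through the block-Kaczmarz identification and shows $\ker(\ve{Q}_\tau)=\ker(\ve{Q}_{\tau'})$, which is a bit more machinery but has the advantage of making the equality of the projectors $\ve{I}-\ve{Q}_\tau^\dagger\ve{Q}_\tau$ and $\ve{I}-\ve{Q}_{\tau'}^\dagger\ve{Q}_{\tau'}$ explicit; this is precisely the form in which the lemma is later used to tighten the $\beta$ parameter in the covering. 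Either argument is fine, and your bookkeeping remarks about spanning forests in the disconnected case are a useful clarification that the paper leaves implicit.
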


\begin{proof}
This follows immediately from the fact that $\mathcal{G}_\tau$ and $\mathcal{G}_{\tau'}$ have the same vertex set, and block gossip simply averages the stored values of said vertex set at each iteration.
\end{proof}

The value of this lemma comes from the fact that our convergence rate depends (in part) on the maximum singular values of our blocks: removing rows from a matrix (i.e., using a subset of edges in our block) decreases said singular values, improving the convergence rate.


Throughout this section, we make use of the fact that for any collection of row indices $\tau$, the spectra of $\ve{Q}_\tau \ve{Q}_\tau^\top$ and $\ve{L}_\tau$ are the same, up to zeros. In particular, we can identify our covering constants $\alpha$ and $\beta$ by analyzing the spectrum of $\ve{L}_\tau$, which is well understood for many of the graph structures we will consider.


\subsection{Independent edge set blocks}

In the case that $T = \{\tau_1, \tau_2, \cdots, \tau_d\}$ is a row covering of $\ve{Q}$ where each $\mathcal{G}_{\tau_i}$ is an independent edge set, we have that $$\ve{Q}_{\tau_i}\ve{Q}_{\tau_i}^\top = 2\ve{I}$$ for each $i \in [d].$ Thus, $T$ is a $(d, 2, 2, r, R)$ row covering of $\ve{Q}$ and we have that $$\left(1 - \frac{r\alpha(\mathcal{G})}{\beta d}\right) = \left(1 - \frac{r\alpha(\mathcal{G})}{2 d}\right).$$

Note that each independent each set is its own spanning tree, so there is no improvement to be made here via Lemma~\ref{spanninglemma}.

\subsection{Path blocks}

In the case that $T = \{\tau_1, \tau_2, \cdots, \tau_d\}$ is a row covering of $\ve{Q}$ where each $\mathcal{G}_{\tau_i}$ is a path, we make use of the following fact about the eigenvalues of the Laplacian of a path subgraph (see e.g., \cite{alggraphtheorybook}).

\begin{fact}
Let $P_n$ be a path subgraph of $G$ of length $n$. Then the eigenvalues of $L_{P_n}$ are
\begin{equation}
    2 - 2\cos\frac{\pi k}{n} \quad \text{  for $k = 0, 1, \cdots, n-1$.}
\end{equation}
\end{fact}

We then have for each $i \in [d]$ that
\begin{align}
    \lambda_{\max}(\ve{Q}_{\tau_i}\ve{Q}_{\tau_i}^\top) &= \lambda_{\max}(\ve{Q}_{\tau_i}^\top \ve{Q}_{\tau_i}) = 2 - 2\cos \frac{|\tau_i|\pi}{|\tau_i|+1} \le 2 - 2\cos \frac{M\pi}{M+1},
\end{align}
so $T$ is a $(d, \alpha, 2 - 2\cos \frac{M\pi}{M+1}, r, R)$ row covering.  Thus we have that $$\left(1 - \frac{r\alpha(\mathcal{G})}{\beta d}\right) = \left(1 - \frac{r\alpha(\mathcal{G})}{(2 - 2\cos \frac{M\pi}{M+1}) d}\right) \le \left(1 - \frac{r\alpha(\mathcal{G})}{4 d}\right).$$

Again, each path is its own spanning tree, so this rate cannot be improved via Lemma~\ref{spanninglemma}.

\subsection{Clique blocks}

In the case that $T = \{\tau_1, \tau_2, \cdots, \tau_d\}$ is a row covering of $\ve{Q}$ where each $\mathcal{G}_{\tau_i}$ is a complete subgraph of $\mathcal{G}$, we make use of Lemma~\ref{spanninglemma}; in particular, for each $\tau \in T$ there exists $\tau' \subset \tau$ such that $\mathcal{G}_{\tau'}$ is a spanning \emph{path} of $G_\tau$.  See Figures~\ref{fig:AC3} and \ref{fig:AC4} for an example of how a spanning path block update coincides with the update produced by a clique block update.

In this way, we see that the bound on the convergence rate constant for complete subgraphs must be no larger than that of path subgraphs and we recover the constant $$\left(1 - \frac{r\alpha(\mathcal{G})}{(2 - 2\cos \frac{M\pi}{M+1}) d}\right) \le \left(1 - \frac{r\alpha(\mathcal{G})}{4 d}\right).$$

\subsection{Arbitrary connected subgraph blocks}

In the case that $T = \{\tau_1, \tau_2, \cdots, \tau_d\}$ is a row covering of $\ve{Q}$ where each $\mathcal{G}_{\tau_i}$ is an arbitrary connected subgraph of $\mathcal{G}$, we again make use of the fact that we may replace every block $\tau \in T$ with a block $\tau' \subset \tau$ such that $\mathcal{G}_{\tau'}$ is a spanning tree of $\mathcal{G}_{\tau}$ to form $T' = \{\tau_1', \tau_2', \cdots, \tau_d'\}$.  We note that the block gossip method with blocks sampled from $T$ will produce the same set of iterates as those sampled identically from $T'$.

Now, we use that fact that the eigenvalues of the Laplacian of a tree, $L_\mathcal{T}$, are bounded above by $|\mathcal{E}(\mathcal{T})|$ (and that in fact this bound is tight for the star graph), see \cite{alggraphtheorybook}.  Thus, we have $$\lambda_{\max}(\ve{Q}_{\tau_i'}\ve{Q}_{\tau_i'}^\top) = \lambda_{\max}(\ve{Q}_{\tau_i'}^\top \ve{Q}_{\tau_i'}) \le |\tau_i| \le M.$$ We use this bound to recover the constant upper bound $$\left(1 - \frac{r\alpha(\mathcal{G})}{M d}\right).$$

\subsection{Multiple subgraph blocks}
%
If the network allows for multiple disjoint components to be activated at a single instance, we may form blocks consisting of multiple disjoint subgraphs of $\mathcal{G}$. In this case, to compute $\beta$ we may use the following Lemma.

\begin{lemma}
\label{laplacian_union}
Let $\mathcal{G}_\tau$ be a subgraph of $\mathcal{G}$ consisting of disjoint connected subgraphs $\mathcal{G}_{\tau_1}, \cdots, \mathcal{G}_{\tau_k}$. Then 
\begin{equation}
    \lambda_{\max}(\ve{Q}_\tau \ve{Q}_\tau^\top) = \max_i \lambda_{\max}(\ve{Q}_{\tau_i} \ve{Q}_{\tau_i}^\top).
\end{equation}
\end{lemma}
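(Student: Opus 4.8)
The plan is to exploit the block-diagonal structure that disjointness induces. First I would note that because the subgraphs $\mathcal{G}_{\tau_1}, \dots, \mathcal{G}_{\tau_k}$ are \emph{disjoint}, the edge sets $\tau_1, \dots, \tau_k$ partition $\tau$, and no node is shared between distinct components. This means that after a suitable simultaneous reordering of the rows (edges) and columns (nodes) of $\ve{Q}_\tau$, the matrix $\ve{Q}_\tau$ takes block-diagonal form $\ve{Q}_\tau = \operatorname{diag}(\ve{Q}_{\tau_1}, \dots, \ve{Q}_{\tau_k})$ (up to entirely zero columns corresponding to nodes touched by no edge in $\tau$, which contribute nothing). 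The key point is that a reordering of rows and columns corresponds to conjugating by permutation matrices, which does not alter the spectrum.

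Next I would compute $\ve{Q}_\tau \ve{Q}_\tau^\top$ directly from this block structure. Since $\ve{Q}_\tau$ is block diagonal, the Gram matrix $\ve{Q}_\tau \ve{Q}_\tau^\top$ is also block diagonal with blocks $\ve{Q}_{\tau_i} \ve{Q}_{\tau_i}^\top$ along the diagonal; the off-diagonal blocks vanish precisely because edges in distinct $\tau_i$ share no node, so $\ve{q}_l^\top \ve{q}_{l'} = 0$ whenever $l, l'$ index edges in different components. I would make this disjointness-to-orthogonality step explicit, as it is the heart of why the cross terms disappear: two incidence-matrix rows have nonzero inner product only if their edges meet at a common node.

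Finally, I would invoke the standard fact that the spectrum of a block-diagonal matrix is the union of the spectra of its diagonal blocks. Hence the set of eigenvalues of $\ve{Q}_\tau \ve{Q}_\tau^\top$ equals $\bigcup_{i=1}^k \operatorname{spec}(\ve{Q}_{\tau_i} \ve{Q}_{\tau_i}^\top)$, and taking the maximum over this union yields
\begin{equation*}
    \lambda_{\max}(\ve{Q}_\tau \ve{Q}_\tau^\top) = \max_i \lambda_{\max}(\ve{Q}_{\tau_i} \ve{Q}_{\tau_i}^\top),
\end{equation*}
which is the claim. I do not anticipate a serious obstacle here; this is a routine block-diagonalization argument. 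The only point requiring a little care is the bookkeeping of which columns of $\ve{Q}_\tau$ are genuinely nonzero: one should confirm that nodes incident to edges in $\tau_i$ are disjoint from those incident to edges in $\tau_j$ for $i \ne j$ (this is exactly the disjointness hypothesis on the subgraphs), so that the column index sets of the blocks are also disjoint and the block-diagonal decomposition is legitimate. An analogous statement for $\lambda_{\min +}$ and $\alpha$ would follow by the same argument, though the lemma as stated concerns only $\beta = \lambda_{\max}$.
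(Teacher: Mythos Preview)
Your proposal is correct and follows essentially the same block-diagonalization argument as the paper: disjointness forces the relevant Gram matrix to decompose as a direct sum, whence the spectrum is the union of the block spectra. The only cosmetic difference is that the paper phrases the direct-sum decomposition in terms of the Laplacians $\ve{L}_{\tau_i} = \ve{Q}_{\tau_i}^\top \ve{Q}_{\tau_i}$ (invoking that $\ve{Q}_\tau \ve{Q}_\tau^\top$ and $\ve{L}_\tau$ share nonzero spectrum) rather than working directly with $\ve{Q}_\tau \ve{Q}_\tau^\top$ as you do.
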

\begin{proof}
This follows immediately from the fact that since said subgraphs are edge-disjoint, we have that the Laplacian of their union is the direct sum of their individual Laplacians:
\begin{equation}
    \ve{L}_\tau = \ve{L}_{\tau_1} \oplus \cdots \oplus \ve{L}_{\tau_k},
\end{equation}
and so the spectrum of $\ve{L}_\tau$ is exactly the union of the spectra of $\ve{L}_{\tau_1}, \cdots, \ve{L}_{\tau_k}$.
\end{proof}

Suppose then that we take $T = \{\tau_1, \tau_2, \cdots, \tau_d\}$, where each $\tau_i$ is the union of a collection of rows corresponding to disjoint connected subgraphs, say $\tau_i = \bigcup_{j}\tau^{j}_i$. Then by Lemma~\ref{laplacian_union} we have 
\[
\lambda_{\max}(\ve{Q}_{\tau_i}\ve{Q}_{\tau_i}^\top) = \max_{j}\lambda_{\max}(\ve{Q}_{\tau^j_i}\ve{Q}_{\tau^j_i}^\top).
\]
One may then apply the relevant previous results of Section~\ref{sec:BGsampling} to compute an upper bound on this quantity, yielding $\beta$.




\section{Inconsistent Consensus Models}\label{sec:inconsistent}

In this section, we consider two models of inconsistent average consensus where communication across edges is noisy and provide analyses of the natural block gossip method in these cases. 

\subsection{Constant edge communication error}

Consider the average consensus problem in the presence of constant edge communication error; that is, some blocks of nodes do not update to local consensus during iterations of the block gossip method, but instead update according to an attempt to satisfy constant edge miscommunication values.  We denote $\ve{m} \in \mathbb{R}^{|\mathcal{E}|}$ as the edge miscommunication values and consider the block gossip updates under this edge miscommunication to be 
\begin{equation}\label{eq:noisyBG}
    \ve{c}_k = \ve{c}_{k-1} + \ve{Q}^\dagger_\tau(\ve{m}_\tau - \ve{Q}_\tau\ve{c}_{k-1}).
\end{equation}  We can apply Theorem~\ref{thm:main} to prove the following corollary.  This result yields a guarantee of convergence to a convergence horizon that depends upon the edge miscommunication vector $\ve{m}$.

\begin{corollary}\label{cor:constNoisyBG}
Suppose graph $\mathcal{G} = (\mathcal{V},\mathcal{E})$ is connected, $\ve{Q} \in \mathbb{R}^{|\mathcal{E}| \times |\mathcal{V}|}$ is the incidence matrix for $\mathcal{G}$, and $T = \{\tau_1, \cdots,\tau_d\}$ is a $(d,\alpha,\beta,r,R)$ row covering for $\ve{Q}$.  Then the block gossip method under edge miscommunication $\ve{m}$ as defined in \eqref{eq:noisyBG} with blocks determined by $T$ converges at least linearly in expectation to a horizon determined by $\ve{m}$ with the guarantee $$\mathbb{E}\norm{\ve{c}_k - \ve{c}^*}^2 \le \left(1 - \frac{r\alpha(\mathcal{G})}{\beta d}\right)^k \norm{\ve{c} - \ve{c}^*}^2 + \frac{\beta R}{\alpha r \alpha(\mathcal{G})}\|\ve{m}\|^2,$$ where $\alpha(\mathcal{G})$ is the algebraic connectivity of graph $\mathcal{G}$. 
\end{corollary}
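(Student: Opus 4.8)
The plan is to recognize that the noisy update \eqref{eq:noisyBG} is \emph{literally} the block randomized Kaczmarz iteration of Algorithm~\ref{alg:BK} applied to the system $\ve{A} = \ve{Q}$, $\ve{b} = \ve{m}$ with initial iterate $\ve{x}_0 = \ve{c}$. Indeed, substituting these choices into the Block RK update $\ve{x}_k \leftarrow \ve{x}_{k-1} + \ve{A}_\tau^\dagger(\ve{b}_\tau - \ve{A}_\tau \ve{x}_{k-1})$ reproduces \eqref{eq:noisyBG} exactly, so --- unlike in the proof of Corollary~\ref{cor:BG}, where one appeals to the averaging interpretation via \cite[Theorem 5]{loizou2019revisiting} --- here the identification with Block RK is immediate and no separate equivalence argument is needed. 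The entire statement will then follow by specializing Theorem~\ref{thm:main}.

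The crux is to choose the residual correctly, so that the limit point the theorem measures against is the true consensus vector $\ve{c}^*$ and the horizon is expressed through $\norm{\ve{m}}$. Since $\ve{Q}\ve{x} = \ve{m}$ is in general inconsistent, Theorem~\ref{thm:main} provides a family of bounds indexed by the choice of $\ve{x}_{\ve{e}}$ (equivalently, of the residual $\ve{e} = \ve{Q}\ve{x}_{\ve{e}} - \ve{m}$), each measuring convergence to its own target $\ve{x}^\ast = (\ve{I} - \ve{Q}^\dagger\ve{Q})\ve{x}_0 + \ve{Q}^\dagger(\ve{b} + \ve{e})$. I would take $\ve{x}_{\ve{e}} = \ve{0}$, so that $\ve{e} = -\ve{m}$ and $\ve{b} + \ve{e} = \ve{m} - \ve{m} = \ve{0}$. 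The target then collapses to $\ve{x}^\ast = (\ve{I} - \ve{Q}^\dagger\ve{Q})\ve{c}$, which is exactly the orthogonal projection of $\ve{c}$ onto $\operatorname{ker}(\ve{Q}) = \operatorname{span}\{\ve{1}\}$; by the identical computation carried out in the proof of Corollary~\ref{cor:BG} this projection equals $\ve{c}^\ast$. With this choice, $\norm{\ve{e}}^2 = \norm{\ve{m}}^2$.

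It then remains to substitute into the conclusion of Theorem~\ref{thm:main} the relations $\sigma_{\min+}^2(\ve{Q}) = \lambda_{\min+}(\ve{L}) = \alpha(\mathcal{G})$ (established in the proof of Corollary~\ref{cor:BG}) together with $\ve{x}^\ast = \ve{c}^\ast$ and $\norm{\ve{e}}^2 = \norm{\ve{m}}^2$, which directly yields
\[
\mathbb{E}\norm{\ve{c}_k - \ve{c}^*}^2 \le \left(1 - \frac{r\alpha(\mathcal{G})}{\beta d}\right)^k \norm{\ve{c} - \ve{c}^*}^2 + \frac{\beta R}{\alpha r \alpha(\mathcal{G})}\norm{\ve{m}}^2.
\]
There is no hard calculation here; the only genuinely delicate point is the selection of $\ve{x}_{\ve{e}}$. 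The remark following Theorem~\ref{thm:main} notes that the horizon is minimized by the minimum-norm residual $\ve{e}^*$, but that choice would pull the target to the least-squares solution of $\ve{Q}\ve{x} = \ve{m}$ rather than to the consensus $\ve{c}^*$. Choosing $\ve{e} = -\ve{m}$ is precisely the trade-off that keeps the target at the true consensus value while tying the convergence horizon to $\norm{\ve{m}}^2$, so I would be careful to justify this choice explicitly rather than reflexively optimizing the horizon.
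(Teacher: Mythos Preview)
Your proposal is correct and follows essentially the same route as the paper: apply Theorem~\ref{thm:main} with $\ve{A}=\ve{Q}$, $\ve{b}=\ve{m}$, $\ve{x}_0=\ve{c}$, and the key choice $\ve{e}=-\ve{m}$ (i.e., $\ve{x}_{\ve{e}}=\ve{0}$), so that $\ve{x}^\ast=(\ve{I}-\ve{Q}^\dagger\ve{Q})\ve{c}=\ve{c}^\ast$ and the horizon becomes $\frac{\beta R}{\alpha r\,\alpha(\mathcal{G})}\norm{\ve{m}}^2$. Your additional remarks on why one should \emph{not} pick the minimum-norm residual here are a nice clarification but do not change the argument.
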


\begin{proof}
This result follows from Theorem~\ref{thm:main} where $\ve{b} = \ve{m}$, $\ve{A} = \ve{Q}$, and $\ve{e} = -\ve{m}$.  Note that $\ve{x}^* = (\ve{I} - \ve{Q}^\dagger \ve{Q})\ve{c} + \ve{Q}^\dagger(\ve{m} - \ve{m}) = (\ve{I} - \ve{Q}^\dagger \ve{Q})\ve{c} = \ve{c}^*$.
\end{proof}

\subsection{Randomly varying edge communication error}

We now consider the average consensus problem in the presence of randomly varying edge communication error.  During the block gossip method, blocks do not update to local consensus but instead update to attempt to satisfy the iteration dependent edge miscommunication values.  We denote $\ve{m}_k \in \mathbb{R}^{|\mathcal{E}|}$ as the edge miscommunication values during the $k$th iteration and consider the block gossip updates under edge miscommunication to be 
\begin{equation}\label{eq:randomnoisyBG}
    \ve{c}_k = \ve{c}_{k-1} + \ve{Q}^\dagger_\tau((\ve{m}_k)_\tau - \ve{Q}_\tau\ve{c}_{k-1}).
\end{equation}
We prove a generalization of Theorem~\ref{thm:main} and use it to prove a guarantee of convergence to a convergence horizon that depends upon the distribution of the edge miscommunication values.

\begin{proposition}\label{prop:randomNoiseBRK}
Let $\ve{b} \in \text{r}(\ve{A})$ with $\ve{A}$ not necessarily full rank.  Consider running the block Kaczmarz method with matrix $\ve{A}$ and vector $\ve{b}_k = \ve{b} + \ve{e}_k$ in the $k$th iteration; that is $$\ve{x}_k = \ve{x}_{k-1} + \ve{A}_{\tau}^\dagger((\ve{b}_k)_\tau - \ve{A}_\tau\ve{x}_{k-1}).$$  Assume that $\{\ve{e}_k\}$ is sampled i.i.d.\ according to distribution $\mathscr{D}$, $\ve{e}_k \sim \mathscr{D}$, with $\mathbb{E}_{\mathscr{D}}[\ve{e}_k] = \ve{0}$ and $\text{cov}(\ve{e}_k) = \mathbb{E}_{\mathscr{D}}[\ve{e}_k \ve{e}_k^\top] = \ve{\Sigma}$.  Let $T = \{\tau_1, \tau_2, \cdots, \tau_d\}$ be a $(d,\alpha,\beta,r,R)$ row covering of $A$. Let $\ve{x}^* = (\ve{I} - \ve{A}^\dagger \ve{A})\ve{x}_0 + \ve{A}^\dagger\ve{b}$.  Then we have $$\mathbb{E} \|\ve{x}_{j+1} - \ve{x}^*\|^2 \le \left(1 - \frac{r\sigma_{\min +}^2(\ve{A})}{\beta d}\right)^{j+1}\|\ve{x}_0 - \ve{x}^*\|^2 + \frac{\beta R}{\alpha r \sigma_{\min +}^2(\ve{A})} \text{tr}(\ve{\Sigma}).$$
\end{proposition}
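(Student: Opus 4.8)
The plan is to follow the template of the proof of Theorem~\ref{thm:main}, inserting an extra expectation over the per-iteration noise exactly at the point where the residual was previously treated as fixed. First I would observe that since $\ve{b} \in \operatorname{Im}(\ve{A})$ we have $\ve{A}\ve{A}^\dagger \ve{b} = \ve{b}$, so the target $\ve{x}^* = (\ve{I} - \ve{A}^\dagger\ve{A})\ve{x}_0 + \ve{A}^\dagger \ve{b}$ satisfies $\ve{A}\ve{x}^* = \ve{b}$, and hence $\ve{A}_\tau \ve{x}^* = \ve{b}_\tau$ for every block $\tau$. Writing $(\ve{b}_{j+1})_\tau = \ve{b}_\tau + (\ve{e}_{j+1})_\tau$, the update rearranges to
\[
\ve{x}_{j+1} - \ve{x}^* = (\ve{I} - \ve{A}_\tau^\dagger \ve{A}_\tau)(\ve{x}_j - \ve{x}^*) + \ve{A}_\tau^\dagger (\ve{e}_{j+1})_\tau,
\]
which is precisely the decomposition in \eqref{eq:errorsquared} with the fixed residual $-\ve{e}_\tau$ replaced by the fresh noise $(\ve{e}_{j+1})_\tau$. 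Since $\operatorname{Im}(\ve{A}_\tau^\dagger)$ and $\operatorname{Im}(\ve{I} - \ve{A}_\tau^\dagger\ve{A}_\tau)$ are orthogonal, the squared norm splits as $\norm{(\ve{I} - \ve{A}_\tau^\dagger\ve{A}_\tau)(\ve{x}_j - \ve{x}^*)}^2 + \norm{\ve{A}_\tau^\dagger(\ve{e}_{j+1})_\tau}^2$.

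I would then condition on $\ve{x}_j$ and take expectation over the two independent sources of randomness at step $j+1$: the uniform block choice $\tau$ and the noise $\ve{e}_{j+1} \sim \mathscr{D}$ (which is independent of $\ve{x}_j$ since it only enters at this step). The projection term is handled verbatim as in Theorem~\ref{thm:main}: the same induction shows $\ve{x}_j - \ve{x}^* \in \operatorname{Im}(\ve{A}^\top)$ — the base case follows from $\ve{x}_0 - \ve{x}^* = \ve{A}^\dagger(\ve{A}\ve{x}_0 - \ve{b})$ and the inductive step from $\operatorname{Im}(\ve{A}_\tau^\dagger) \subseteq \operatorname{Im}(\ve{A}^\top)$ — so the block-averaging bound and the $\sigma_{\min +}$ argument yield the contraction factor $1 - r\sigma_{\min +}^2(\ve{A})/(\beta d)$ on $\norm{\ve{x}_j - \ve{x}^*}^2$.

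The one genuinely new computation is the noise term. Using the pointwise bound $\norm{\ve{A}_\tau^\dagger \ve{v}}^2 \le \sigma_{\min +}^{-2}(\ve{A}_\tau)\norm{\ve{v}}^2 \le \alpha^{-1}\norm{\ve{v}}^2$ already invoked in Theorem~\ref{thm:main}, and then taking expectation over the zero-mean noise so that $\mathbb{E}\norm{(\ve{e}_{j+1})_\tau}^2 = \sum_{i \in \tau}\ve{\Sigma}_{ii} = \operatorname{tr}(\ve{\Sigma}_{\tau\tau})$, I would obtain $\mathbb{E}\norm{\ve{A}_\tau^\dagger(\ve{e}_{j+1})_\tau}^2 \le \alpha^{-1}\operatorname{tr}(\ve{\Sigma}_{\tau\tau})$ for each fixed $\tau$. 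Averaging over $\tau$ with the same counting identity used in \eqref{errorestim} gives $\frac{1}{d}\sum_{l=1}^d \operatorname{tr}(\ve{\Sigma}_{\tau_l\tau_l}) = \frac{1}{d}\sum_{i}\left[\sum_{l}\ve{1}(i\in\tau_l)\right]\ve{\Sigma}_{ii} \le \frac{R}{d}\operatorname{tr}(\ve{\Sigma})$. The per-step recursion is therefore identical to that of Theorem~\ref{thm:main} with $\norm{\ve{e}}^2$ replaced by $\operatorname{tr}(\ve{\Sigma})$, and unrolling it together with the geometric series $\sum_{i\ge 0}(1 - r\sigma_{\min +}^2(\ve{A})/(\beta d))^i = \beta d / (r\sigma_{\min +}^2(\ve{A}))$ delivers the horizon $\frac{\beta R}{\alpha r\sigma_{\min +}^2(\ve{A})}\operatorname{tr}(\ve{\Sigma})$. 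The main thing to get right — and the only real obstacle — is the bookkeeping of the two independent randomness sources: one must verify that $\ve{e}_{j+1}$ is independent of both $\ve{x}_j$ and $\tau$, so that the cross term vanishes and the conditional expectations factor, and observe that only the diagonal of $\ve{\Sigma}$ (hence $\operatorname{tr}(\ve{\Sigma})$) survives, the off-diagonal covariances being irrelevant.
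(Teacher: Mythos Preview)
Your proposal is correct and follows essentially the same approach as the paper's proof: both obtain the orthogonal decomposition of $\ve{x}_{j+1}-\ve{x}^*$, invoke the row-space induction and covering bounds from Theorem~\ref{thm:main} for the contraction term, bound the noise term via $\norm{\ve{A}_\tau^\dagger(\ve{e})_\tau}^2 \le \alpha^{-1}\norm{(\ve{e})_\tau}^2$ together with the counting identity, and then unroll the recursion with the geometric series. The only cosmetic difference is the order in which you integrate out $\tau$ versus $\ve{e}_{j+1}$ in the noise term (you average the noise first to get $\operatorname{tr}(\ve{\Sigma}_{\tau\tau})$ and then over $\tau$, whereas the paper averages over $\tau$ first to get $\tfrac{R}{\alpha d}\norm{\ve{e}_j}^2$ and then over the noise); by independence these are equivalent.
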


\begin{proof}
This proof proceeds in a manner highly similar to that of Theorem~\ref{thm:main}.  First, note that in a calculation similar to \eqref{eq:errorcalc}, we have that $\ve{A}\ve{x}^* - \ve{b}_k = \ve{e}_k$.  Then, recalling that the updates take the form $\ve{x}_{j+1} = \ve{x}_j + \ve{A}_\tau^\dagger((\ve{b}_k)_\tau - \ve{A}_\tau \ve{x}_j)$ where $\tau$ is chosen uniformly from $T$, we compute $$\|\ve{x}_{j+1} - \ve{x}^*\|^2 = \|(\ve{I} - \ve{A}_\tau^\dagger \ve{A}_\tau)(\ve{x}_j - \ve{x}^*)\|^2 + \|\ve{A}_\tau^\dagger(\ve{e}_j)_\tau\|^2$$ in a manner similar to that of \eqref{eq:errorsquared}. Using \eqref{eq:breakuperror} and taking expectation with respect to the sampled block in iteration $j$, $\tau_j$, conditioned on all previously sampled blocks, we have 
\begin{align*}
    \mathbb{E}_{\tau_j} \|\ve{x}_{j+1} - \ve{x}^*\|^2 &= \|\ve{x}_j - \ve{x}^*\|^2 - \mathbb{E}_j \|\ve{A}_\tau^\dagger \ve{A}_\tau (\ve{x}_j - \ve{x}^*)\|^2 + \frac{1}{\alpha} \mathbb{E}_j \|(\ve{e}_j)_\tau\|^2\\
    &\le \left(1 - \frac{r\sigma_{\min +}^2(\ve{A})}{\beta d}\right)\|\ve{x}_j - \ve{x}^*\|^2 + \frac{R}{\alpha d} \|\ve{e}_j\|^2.
\end{align*}

Now, taking expectation with respect to the sampled error $\ve{e}_j \sim \mathcal{D}$ conditioned upon all previously sampled errors, we arrive at 
\begin{align*}
    \mathbb{E}_{\ve{e}_j}\left[\mathbb{E}_{\tau_j} \|\ve{x}_{j+1} - \ve{x}^*\|^2\right] &\le \left(1 - \frac{r\sigma_{\min +}^2(\ve{A})}{\beta d}\right)\|\ve{x}_j - \ve{x}^*\|^2 + \frac{R}{\alpha d} \mathbb{E}_{\ve{e}_j}\|\ve{e}_j\|^2 \\
    &= \left(1 - \frac{r\sigma_{\min +}^2(\ve{A})}{\beta d}\right)\|\ve{x}_j - \ve{x}^*\|^2 + \frac{R}{\alpha d} \text{tr}(\ve{\Sigma}).
\end{align*}
Iterating this expectation and proceeding inductively as in \eqref{eq:induction}, we arrive at the desired result.
\end{proof}

We may now use this result to prove the following guarantee for convergence of block gossip methods in the presence of randomly varying edge miscommunication error.

\begin{corollary}\label{cor:randNoisyBG}
Suppose graph $\mathcal{G} = (\mathcal{V},\mathcal{E})$ is connected, $\ve{Q} \in \mathbb{R}^{|\mathcal{E}| \times |\mathcal{V}|}$ is the incidence matrix for $\mathcal{G}$, and $T = \{\tau_1, \cdots,\tau_d\}$ is a $(d,\alpha,\beta,r,R)$ row covering for $\ve{Q}$.  Assume that $\{\ve{m}_k\}$ is sampled i.i.d.\ according to distribution $\mathcal{D}$, $\ve{m}_k \sim \mathcal{D}$, with $\mathbb{E}_{\mathcal{D}}[\ve{m}_k] = 0$ and $\text{cov}(\ve{m}_k) = \mathbb{E}_{\mathcal{D}}[\ve{m}_k\ve{m}_k^\top] = \ve{\Sigma}$. Then the block gossip method under randomly varying edge miscommunication $\ve{m}_k$ as defined in \eqref{eq:randomnoisyBG} with blocks determined by $T$ converges at least linearly in expectation to a horizon determined by $\text{tr}(\ve{\Sigma})$ with the guarantee $$\mathbb{E}\norm{\ve{c}_k - \ve{c}^*}^2 \le \left(1 - \frac{r\alpha(\mathcal{G})}{\beta d}\right)^k \norm{\ve{c} - \ve{c}^*}^2 + \frac{\beta R}{\alpha r \alpha(\mathcal{G})}\text{tr}(\ve{\Sigma}),$$ where $\alpha(\mathcal{G})$ is the algebraic connectivity of graph $\mathcal{G}$. 
\end{corollary}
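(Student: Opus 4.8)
The plan is to recognize that the randomly varying noisy update \eqref{eq:randomnoisyBG} is precisely the time-varying block Kaczmarz iteration analyzed in Proposition~\ref{prop:randomNoiseBRK}, instantiated with $\ve{A} = \ve{Q}$, $\ve{b} = \ve{0}$, and iteration-dependent error $\ve{e}_k = \ve{m}_k$. Indeed, with $\ve{b} = \ve{0}$ the vector $\ve{b}_k = \ve{b} + \ve{e}_k$ of Proposition~\ref{prop:randomNoiseBRK} reduces to $\ve{m}_k$, so the proposition's update becomes exactly \eqref{eq:randomnoisyBG}. With this identification in hand, the corollary should follow by translating the conclusion of the proposition into the graph-theoretic quantities relevant to average consensus, exactly mirroring how Corollary~\ref{cor:BG} was deduced from Theorem~\ref{thm:main}.

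First I would verify the hypotheses of Proposition~\ref{prop:randomNoiseBRK}. The requirement $\ve{b} \in \text{r}(\ve{A})$ holds trivially since $\ve{b} = \ve{0}$ lies in the range of any matrix, and the i.i.d., mean-zero, and covariance assumptions placed on $\{\ve{m}_k\}$ are exactly those placed on $\{\ve{e}_k\}$ in the proposition, so no additional work is needed. Applying the proposition then yields
\begin{equation*}
\mathbb{E}\norm{\ve{c}_k - \ve{x}^*}^2 \le \left(1 - \frac{r\sigma_{\min +}^2(\ve{Q})}{\beta d}\right)^k \norm{\ve{c} - \ve{x}^*}^2 + \frac{\beta R}{\alpha r \sigma_{\min +}^2(\ve{Q})}\text{tr}(\ve{\Sigma}),
\end{equation*}
where $\ve{x}^* = (\ve{I} - \ve{Q}^\dagger\ve{Q})\ve{c} + \ve{Q}^\dagger\ve{0} = (\ve{I} - \ve{Q}^\dagger\ve{Q})\ve{c}$.

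Next I would carry out the two identifications already established in the proof of Corollary~\ref{cor:BG}. Since $\ve{c}_0 = \ve{c}$, the limit point $\ve{x}^* = (\ve{I} - \ve{Q}^\dagger\ve{Q})\ve{c}$ is the orthogonal projection of $\ve{c}$ onto $\operatorname{ker}(\ve{Q}) = \operatorname{span}\{\ve{1}\}$, which equals the consensus vector $\ve{c}^* = \bar{c}\ve{1}$, and $\sigma_{\min +}^2(\ve{Q}) = \lambda_{\min +}(\ve{L}) = \alpha(\mathcal{G})$. Substituting both into the display above gives precisely the stated guarantee. The only point requiring care, and it is minor, is that the mean-zero hypothesis on $\ve{m}_k$ is what keeps the fixed point at $\ve{c}^*$ rather than introducing the bias-then-cancellation phenomenon seen in the constant-error Corollary~\ref{cor:constNoisyBG}; here the stochastic fluctuations instead accumulate only in the horizon term through $\text{tr}(\ve{\Sigma})$. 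Since all of the analytic content is already carried by Proposition~\ref{prop:randomNoiseBRK}, this corollary is essentially a matter of specialization and bookkeeping, and I anticipate no substantive obstacle.
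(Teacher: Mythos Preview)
Your proposal is correct and follows essentially the same approach as the paper: apply Proposition~\ref{prop:randomNoiseBRK} with $\ve{A} = \ve{Q}$, $\ve{b} = \ve{0}$, $\ve{e}_k = \ve{m}_k$, then identify $\ve{x}^* = (\ve{I} - \ve{Q}^\dagger\ve{Q})\ve{c} = \ve{c}^*$ and $\sigma_{\min +}^2(\ve{Q}) = \alpha(\mathcal{G})$. The paper's proof is in fact just a two-sentence version of exactly what you wrote.
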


\begin{proof}
This result follows from Proposition~\ref{prop:randomNoiseBRK} where $\ve{b} = \ve{0}$, $\ve{e}_k = \ve{m}_k$, and $\ve{A} = \ve{Q}$.  Note that $\ve{x}^* = (\ve{I} - \ve{Q}^\dagger \ve{Q})\ve{c} = \ve{c}^*$.
\end{proof}

\begin{remark}
In the case that the blocks consist of single rows, then updates~\eqref{eq:noisyBG} and \eqref{eq:randomnoisyBG} correspond to nodes updating to satisfy a misspecified (nonhomogenous) equation in the AC system, which could model link communication failure.  However, for larger blocks, the interpretation of these updates break down and it is less clear that they model a natural gossip process, as the individual node value updates are produced by the collection of edge miscommunications.  We note, however, that such nonhomogenous systems of equations arise in practice elsewhere, e.g., in rank aggregration from pairwise comparisons via Massey's method~\cite{massey1997statistical} or Hodgerank~\cite{jiang2011statistical}.  
\end{remark}

\section{Experiments}\label{sec:experiments}

In this section we present empirical results from applying block gossip with various choices of block structure to AC problems on multiple graph structures, including Erd\"os-R\'enyi graphs of varying connectivity and square lattice graphs. All experiments were conducted in Python 3.8 with the NetworkX~\cite{hagberg2008exploring} package used to generate and work with graph structures. We also present results from applying said protocols to inconsistent consensus models as detailed in Section~\ref{sec:inconsistent}. 

\subsection{Preliminaries}
Recall that an Erd\"os-R\'enyi graph ER($n$,$p$) on $n$ vertices is formed by randomly including edges between each pair of nodes independently with probability $p$. We choose to experiment on such graphs as they are popular models for real-life networks and highlight the effects of varying connectivity (by varying $p$) on the convergence of the considered gossip protocols. We also run experiments on $n \times n$ square lattice graphs, another widely-studied network structure; see e.g., \cite{pathgossip}.

For all graphs we perform experiments with four block sampling protocols: independent edge sets (IES), cliques, paths, and randomly selected blocks of fixed size. These protocols and the graph structures underlying them are detailed in Subsection~\ref{subsec:mainresults} and Section~\ref{sec:BGsampling}.

To produce an IES cover we use a greedy algorithm that repeatedly finds the largest independent edge set, then removes it from the graph until there are no remaining edges. Similarly, a clique edge cover of the graph is generated by a greedy algorithm that repeatedly finds the largest clique, then removes it from the graph until there are no remaining edges. For path gossip, paths are formed by selecting a node uniformly at random, adding a randomly selected neighbour to it, and continuing to sequentially add neighbours until we have a path of the desired length $l$. Randomly selected blocks are sampled by selecting edges uniformly at random to form a block of a specified size. The blocks generated by these algorithms are then passed into our block gossip algorithm, which randomly samples a block from the list of blocks at each iteration.


We produce two types of plot: \emph{collapse plots}, which are a visualization of individual node values by iteration (as in~\cite{brooks2020model}), and \emph{error plots}, which show the error at each iteration, $\norm{\ve{c}_k - \ve{c}^\ast}$, and for some examples also display the predicted upper bound on convergence given by \cref{cor:BG}. 




\subsection{Erd\"os-R\'enyi Graphs}

We apply each of our block sampling protocols to Erd\"os-R\'enyi graphs ER$(n,p)$ with $p = 0.2, 0.4, 0.6, 0.8, 1$ and $n=200$. 

In \crefrange{fig:ER_0.2_all_protocols}{fig:ER_1_all_protocols} we compare the performance of each protocol (with path length $l=10$ for path gossip) across a range of $p$.  It appears that, in the ER case, connectivity does not have a substantial effect on the relative performance of our protocols, with IES gossip consistently being the strongest by some margin. This aligns with the fact that independent edge sets will have the greatest \emph{node} overlap, particularly when compared to cliques -- in the sense that a single node is likely to be in many more independent edge sets than cliques -- and so a larger amount of information is transferred per iteration.

\begin{figure}
\centering
\begin{minipage}{.5\textwidth}
  \centering
  \includegraphics[scale=0.35]{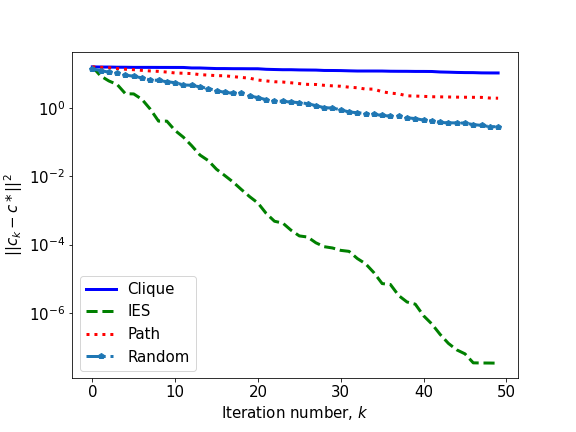}
  \caption{Errors for all protocols applied to ER$(200,0.2)$.}
  \label{fig:ER_0.2_all_protocols}
\end{minipage}%
\begin{minipage}{.5\textwidth}
  \centering
  \includegraphics[scale=0.35]{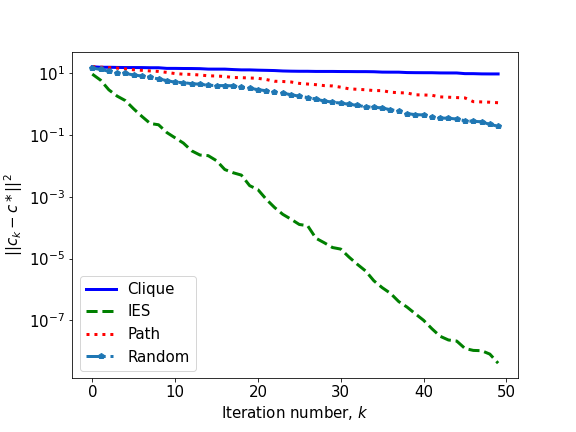}
  \caption{Errors for all protocols applied to ER$(200,0.4)$.}
  \label{fig:ER_0.4_all_protocols}
\end{minipage}
\end{figure}

\begin{figure}
\centering
\begin{minipage}{.5\textwidth}
  \centering
  \includegraphics[scale=0.35]{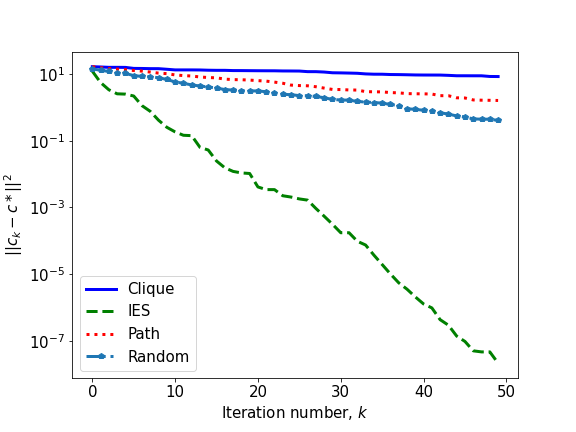}
  \caption{Errors for all protocols applied to ER$(200,0.6)$.}
  \label{fig:ER_0.6_all_protocols}
\end{minipage}%
\begin{minipage}{.5\textwidth}
  \centering
  \includegraphics[scale=0.35]{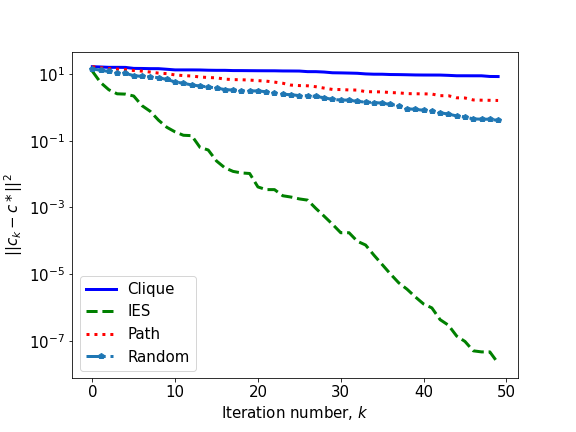}
  \caption{Errors for all protocols applied to ER$(200,1)$.}
  \label{fig:ER_1_all_protocols}
\end{minipage}
\end{figure}

\begin{figure}
\centering
\begin{minipage}{.5\textwidth}
  \centering
  \includegraphics[scale=0.35]{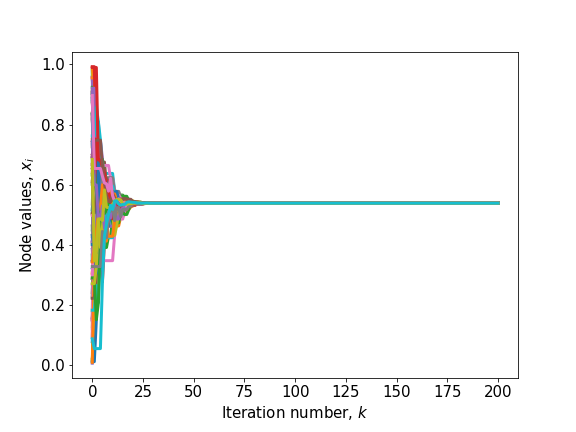}
  \caption{Collapse plot for ER$(100, 0.6)$ under IES gossip.}
  \label{fig:ER_0.6_ies_collapse}
\end{minipage}%
\begin{minipage}{.5\textwidth}
  \centering
  \includegraphics[scale=0.35]{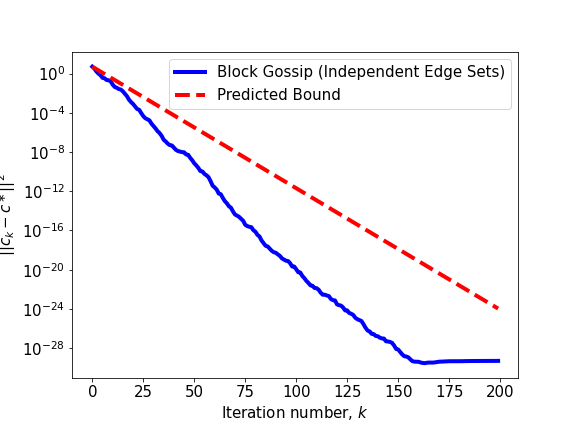}
  \caption{Error plot for ER$(100, 0.6)$ under IES gossip.}
  \label{fig:ER_0.6_ies_error}
\end{minipage}
\end{figure}

In \crefrange{fig:ER_0.6_ies_collapse}{fig:ER_0.6_clique_error}, we present a closer look at best (IES) and worst (clique) performing protocols from the previous experiments. The dramatic difference in convergence rate can be partially explained by the collapse plots: we see that during clique gossip certain nodes will hold their value for many iterations before updating, leading to dramatically slower convergence than IES gossip seen in the error plots. This is again connected to the greater amount of node overlap that persists in IES blocks, compared to cliques. We see from the error plots that both protocols respect the upper bound on convergence given by \cref{cor:BG}, and that said bound predicts that IES gossip should outperform clique gossip.


\begin{figure}
\centering
\begin{minipage}{.5\textwidth}
  \centering
  \includegraphics[scale=0.35]{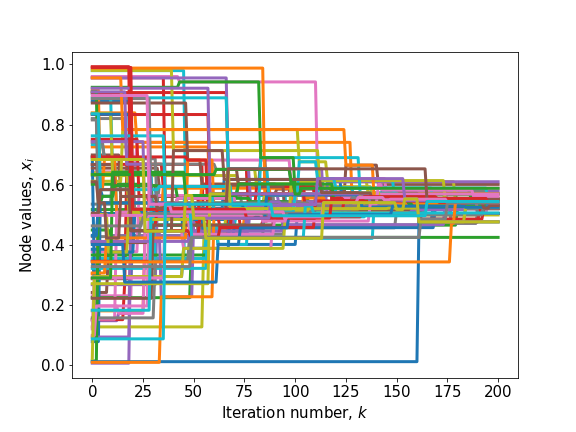}
  \caption{Collapse plot for ER$(100, 0.6)$ under clique gossip.}
  \label{fig:ER_0.6_clique_collapse}
\end{minipage}%
\begin{minipage}{.5\textwidth}
  \centering
  \includegraphics[scale=0.35]{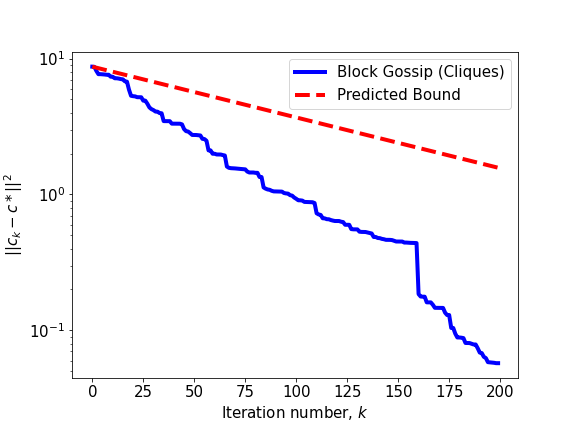}
  \caption{Error plot for ER$(100, 0.6)$ under clique gossip.}
  \label{fig:ER_0.6_clique_error}
\end{minipage}
\end{figure}

We analyze the effect of increasing $p$ (and thus the connectivity of the graph) in \crefrange{fig:varying_p_ies_errors}{fig:varying_p_random_errors}. It can be seen that IES gossip is both fast and robust to variations in connectivity compared to other protocols. This can be attributed heuristically to the fact that independent edge sets are formed by selecting edges which separate nodes well, rather than selecting edges which join nodes well (as in forming cliques). The performance of clique gossip improves significantly with $p$, corresponding to the fact that ER$(n,p)$ is likely to have a greater number of larger cliques as $p$, and thus its average degree, increases. Note that we exclude clique gossip for $p=1$, as the entire graph will be selected as a clique and consensus will be reached in a single iteration.

\begin{figure}
\centering
\begin{minipage}{.5\textwidth}
  \centering
  \includegraphics[scale=0.35]{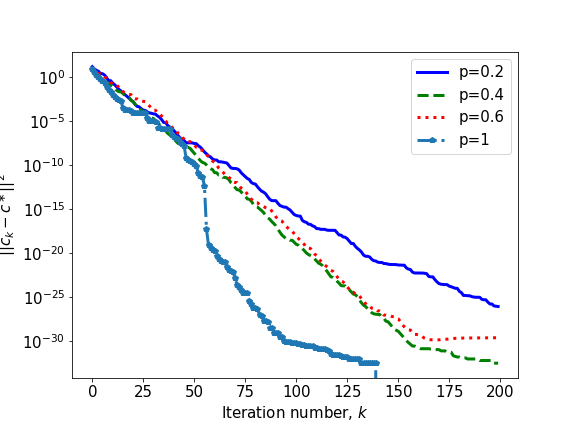}
  \caption{IES gossip errors on ER$(200,p)$ for various $p$.}
  \label{fig:varying_p_ies_errors}
\end{minipage}%
\begin{minipage}{.5\textwidth}
  \centering
  \includegraphics[scale=0.35]{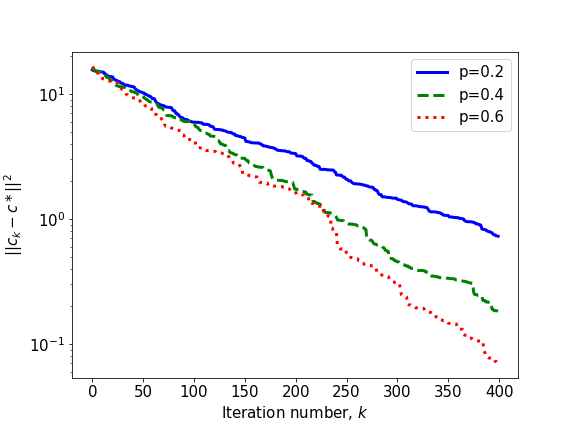}
  \caption{Clique gossip errors on ER$(200,p)$ for various $p$.}
  \label{fig:varying_p_clique_errors}
\end{minipage}
\end{figure}

\begin{figure}
\centering
\begin{minipage}{.5\textwidth}
  \centering
  \includegraphics[scale=0.35]{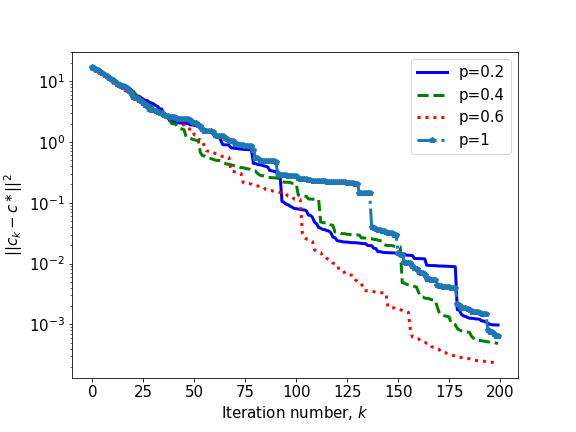}
  \caption{Path gossip errors on ER$(200,p)$ for various $p$.}
  \label{fig:varying_p_path_errors}
\end{minipage}%
\begin{minipage}{.5\textwidth}
  \centering
  \includegraphics[scale=0.35]{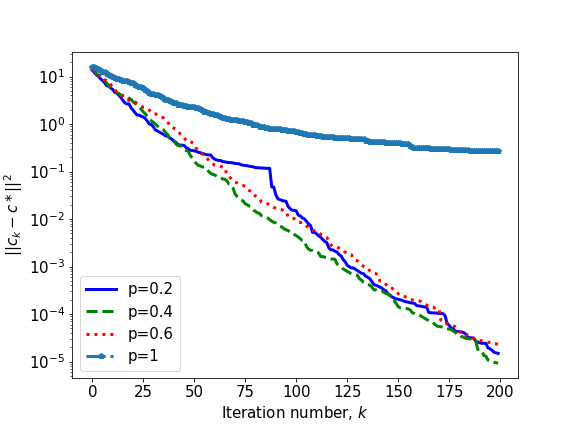}
  \caption{Random gossip errors on ER$(200,p)$ for various $p$.}
  \label{fig:varying_p_random_errors}
\end{minipage}
\end{figure}


\subsection{Square Lattice}

In \cref{fig:lattice_all_methods} we show the error plots from each of our sampling protocols applied to the $10 \times 10$ square lattice. The graph structure restricts the size of cliques to only single edges, leading to poor performance versus other protocols. Moreover, path gossip struggles as nodes on opposite sides of the grid are a large graph distance apart. In this scenario, with our `special' structures being limited, it is sensible to choose blocks at random to attempt to maximise the dispersement of information, and this strategy indeed yields the best performance.

\begin{figure}
    \centering
    \includegraphics[width=.5\textwidth]{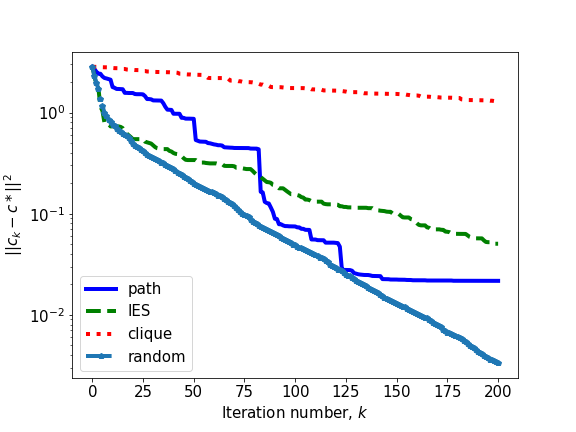}
    \caption{Comparison of sampling protocols applied to a $10 \times 10$ square lattice.}
    \label{fig:lattice_all_methods}
\end{figure}

\subsection{Inconsistent Consensus Models} We experimented with the two types of inconsistent average consensus systems described in Section~\ref{sec:inconsistent}, namely systems with a constant edge communication error (CECE) and systems with a randomly varying edge communication error (VECE). 
In CECE, all iterations are affected by the same edge miscommunication values $\ve{m} \in \mathbb{R}^{|\mathcal{E}|}$ which is generated (in advance) with entries sampled independently from $\mathcal{N}(0,0.01)$, the mean-zero Gaussian distribution with variance $0.01$. 
 In VECE, the edge miscommunication for the $k$th iteration, $\ve{m}_k \in \mathbb{R}^{|\mathcal{E}|}$, has entries sampled independently from $\mathcal{N}(0,0.01)$. 




In Figures~\ref{fig:path_com} and~\ref{fig:ies_com}, we display the errors for consistent block gossip updates ($\ve{m} = \ve{0}$), CECE updates~(\ref{eq:noisyBG}) with constant edge communication values $\ve{m}$ sampled as described above, and VECE updates~(\ref{eq:randomnoisyBG}) with varying edge communication values $\ve{m}_k$ sampled as described above, on an ER$(n,p)$ graph with $n = 150$ and $p = 0.6$ with path block sampling and IES block sampling, respectively.  Note that IES block sampling gossip converges far more quickly than path block sampling, and so we are able to see more clearly the effect of varying error in this plot.  Note that VECE has a fairly smooth convergence horizon, whereas the convergence horizon for CECE varies widely as the updates sample the (fixed) values of edge miscommunication of differing magnitude.

\begin{figure}
\centering
\begin{minipage}{.5\textwidth}
  \centering
  \includegraphics[scale=0.35]{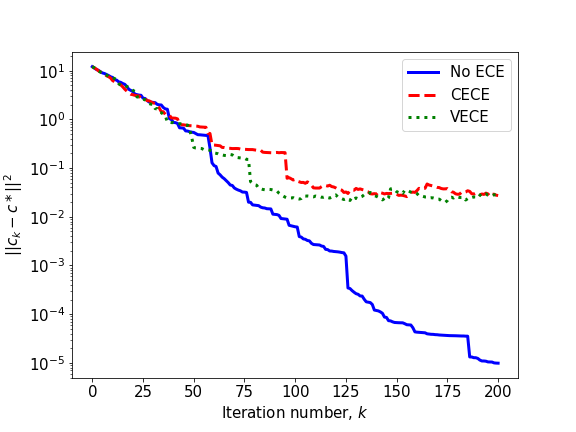}
  \caption{Comparison of effect of different communication errors on path gossip.}
  \label{fig:path_com}
\end{minipage}%
\begin{minipage}{.5\textwidth}
  \centering
  \includegraphics[scale=0.35]{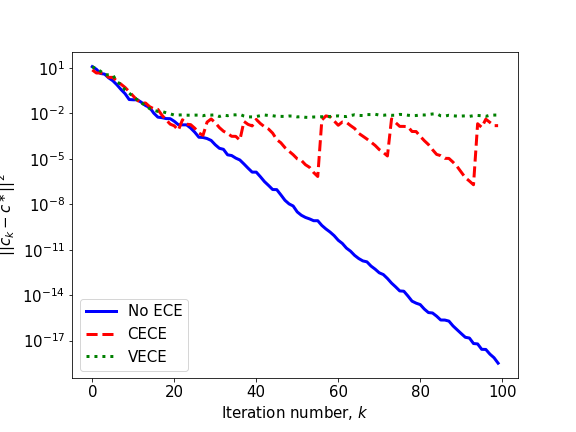}
  \caption{Comparison of effect of different communication errors on IES gossip.}
  \label{fig:ies_com}
\end{minipage}
\end{figure}

In Figures~\ref{fig:cli_com} and~\ref{fig:ran_com}, we display the errors for consistent block gossip updates ($\ve{m} = \ve{0}$), CECE updates~(\ref{eq:noisyBG}) with constant edge communication values $\ve{m}$ sampled as described above, and VECE updates~(\ref{eq:randomnoisyBG}) with varying edge communication values $\ve{m}_k$ sampled as described above, on an ER$(n,p)$ graph with $n = 150$ and $p = 0.6$ with clique block sampling and random block sampling, respectively.  Note that clique block gossip and random block gossip converge so slowly that it takes far longer to see clearly the convergence horizon for the CECE and VECE updates. 

\begin{figure}
\centering
\begin{minipage}{.5\textwidth}
  \centering
  \includegraphics[scale=0.35]{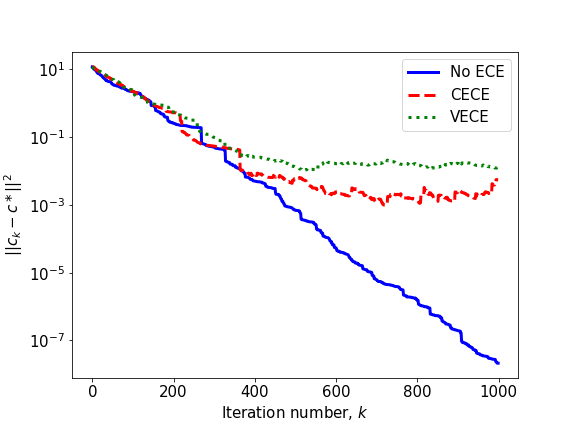}
  \caption{Comparison of effect of different communication errors on clique gossip.}
  \label{fig:cli_com}
\end{minipage}%
\begin{minipage}{.5\textwidth}
  \centering
  \includegraphics[scale=0.35]{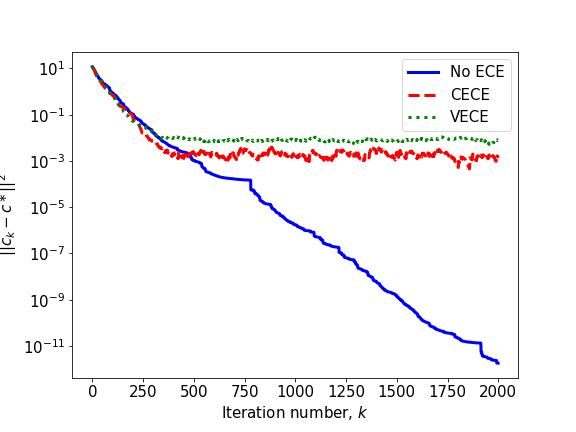}
  \caption{Comparison of effect of different communication errors on random gossip.}
  \label{fig:ran_com}
\end{minipage}
\end{figure}

\section{Conclusions}

In this work, we prove a new convergence result for the block gossip method for the average consensus problem and specialize this theoretical result to \emph{path}~\cite{pathgossip}, \emph{clique}~\cite{cliquegossip}, and \emph{edge-independent set}~\cite{Boyd2006RandomizedGA} gossip protocols.  
We prove this result by exploiting the fact that these methods are generalized by the block randomized Kaczmarz method for solving linear systems.  

We prove a generalized convergence result for the block randomized Kaczmarz method which generalizes the main result of~\cite{needell2013paved} to the case of rank-deficient systems and relaxes requirements on the set of blocks to be sampled.  While these generalization are highly important for the average consensus problem, we expect that they will be of interest in other applications as well.  

We additionally prove convergence results for the block gossip method on inconsistent consensus models, and perform a broad set of experiments to compare our theoretical results to the empirical behavior of the block gossip methods on various network structures.

Future directions include further exploration of inconsistent average consensus models, including random link failure and adversarial nodes, bounded confidence models, which generalize the average consensus model, and block Kaczmarz variants for randomly varying noise.

\section*{Acknowledgements}
The authors thank Jes\'us De Loera, Deanna Needell, Braxton Osting, and Mason Porter for useful conversations and suggestions.

\bibliographystyle{plain}
\bibliography{bib}

\end{document}